\newtheorem{theorem}{Theorem}[section]
\newtheorem{corollary}[theorem]{Corollary}
\newtheorem{lemma}[theorem]{Lemma}
\newtheorem{proposition}[theorem]{Proposition}
\numberwithin{equation}{section}
\def\im{\mathop{\mathrm{Im}}\nolimits}
\def\rank{\mathop{\mathrm{rank}}\nolimits}
\def\dom{\mathop{\mathrm{Dom}}\nolimits}
\def\fix{\mathop{\mathrm{Fix}}\nolimits}
\newcommand{\transf}[1]{\left(\begin{smallmatrix}#1\end{smallmatrix}\right)} 
\newcommand{\lastpage}{\addresss}
\newcommand{\addresss}{\small \sf  

\noindent{\sc De Biao Li}, 
School of Mathematics, 
North University of China, 
Taiyuan, Shanxi 030000, 
P. R. China. \\
e-mail: 20230056@nuc.edu.cn

\medskip

\noindent{\sc V\'\i tor H. Fernandes}, 
Center for Mathematics and Applications (NOVA Math) 
and Department of Mathematics, NOVA FCT, 
Faculdade de Ci\^encias e Tecnologia, 
Universidade Nova de Lisboa, 
Monte da Caparica, 
2829-516 Caparica, 
Portugal. \\
e-mail: vhf@fct.unl.pt
}
\author{
De Biao Li\footnote{This work is partially supported by the National Natural Science Foundation of China (Nos.12271224, 12171213).}
~and V\'\i tor H. Fernandes\footnote{This work is funded by national funds through the FCT - Funda\c c\~ao para a Ci\^encia e a Tecnologia, 
I.P., under the scope of the projects UIDB/00297/2020 and UIDP/00297/2020 (NOVA Math - Center for Mathematics and Applications).}
}
\title{On semigroups of orientation-preserving partial permutations with restricted range}
\begin{document}

\maketitle 

\begin{abstract}
Let $\Omega_n$ be a finite chain with $n$ elements $(n\in\mathbb{N})$, and let $\mathcal{POPI}_{n}$ be the semigroup of all injective orientation-preserving partial transformations of $\Omega_n$. In this paper, for any nonempty subset $Y$ of $\Omega_n$, 
we consider the subsemigroup $\mathcal{POPI}_{n}(Y)$ of $\mathcal{POPI}_{n}$ of all transformations with range contained in $Y$. 
We describe the Green's relations and study the regularity of $\mathcal{POPI}_{n}(Y)$. 
Moreover, we calculate the rank of $\mathcal{POPI}_{n}(Y)$ and 
determine when two semigroups of this type are isomorphic.
\end{abstract}

\medskip

\noindent{\small 2020 \it Mathematics subject classification: \rm 20M20, 20M10}

\noindent{\small\it Keywords: \rm orientation-preserving, restricted range, isomorphism theorem, rank, transformations.}

\section*{Introduction and preliminaries} 

Let $\Omega_n=\{1, 2,\ldots, n\}$ be an $n$-element set ordered in the standard way.
We denote by $\mathcal{PT}_{n}$ the monoid (under composition) of all partial transformations on $\Omega_n$, by $\mathcal{T}_{n}$ the submonoid of $\mathcal{PT}_{n}$ of all full transformations on $\Omega_n$, by $\mathcal{I}_{n}$ the \textit{symmetric inverse monoid} on $\Omega_n$, i.e. the inverse submonoid of $\mathcal{PT}_{n}$ of all partial permutations on $\Omega_n$, 
and by $\mathcal{S}_{n}$ the \textit{symmetric group} on $\Omega_n$, 
i.e. the subgroup of $\mathcal{PT}_{n}$ of all permutations on $\Omega_n$.

A transformation $\alpha$ in $\mathcal{PT}_{n}$ is called \textit{order-preserving} 
if $x\leqslant y$ implies $x\alpha\leqslant y\alpha$, 
for all $x,y \in \dom(\alpha)$. We denote by $\mathcal{O}_{n}$ the submonoid of $\mathcal{T}_{n}$ of all order-preserving 
full transformations and by $\mathcal{POI}_{n}$ the submonoid of $\mathcal{I}_{n}$ of all order-preserving partial permutations. 

Let $s=(a_1,a_2,\ldots,a_t)$ be a sequence of $t$ ($t\geqslant0$) elements from the chain $\Omega_n$. 
We say that $s$ is \textit{cyclic} if there exists no more than one index $i\in\{1,\ldots,t\}$ such that
$a_i>a_{i+1}$, where $a_{t+1}$ denotes $a_1$.
Notice that, the sequence $s$ is cyclic if and only if $s$ is empty 
or there exists $i\in\{0,1,\ldots,t-1\}$ such that $a_{i+1}\leqslant a_{i+2}\leqslant \cdots\leqslant a_t\leqslant a_1\leqslant \cdots\leqslant a_i $ 
(the index $i\in\{0,1,\ldots,t-1\}$ is unique unless $s$ is constant and $t\geqslant2$). 
Given a partial transformation $\alpha\in\mathcal{PT}_{n}$ such that
$\dom(\alpha)=\{a_1<\cdots<a_t\}$, with $t\geqslant0$, we 
say that $\alpha$ is \textit{orientation-preserving}  if the sequence of its images
$(a_1\alpha,\ldots,a_t\alpha)$ is cyclic.   
We denote by $\mathcal{OP}_{n}$ and $\mathcal{POPI}_{n}$ the submonoids of $\mathcal{T}_{n}$ 
and $\mathcal{I}_{n}$ of all orientation-preserving
transformations, respectively.

As usual, the \textit{rank} of a finite semigroup $S$ is defined by
$
\rank(S)=\min\{|A|\mid A\subseteq S,\,\langle A\rangle=S\}.
$ 
It is well known that, for $n\geqslant3$, the ranks of $\mathcal{PT}_{n}$, $\mathcal{T}_{n}$, $\mathcal{I}_{n}$ and $\mathcal{S}_{n}$ are equal to $4$, $3$, $3$ and $2$, respectively. Gomes and Howie~\cite{Gomes&Howie:1992} showed that the rank of the monoid $\mathcal{O}_{n}$ is equal to $n$. Catarino~\cite{Catarino} established the rank of the monoid $\mathcal{OP}_{n}$, and in~\cite{Catarino&Higgins}, Catarino and Higgins proved that $\mathcal{OP}_{n}$ is a regular submonoid of $\mathcal{T}_{n}$. In~\cite{Fernandes&al:2009}, Fernandes et al. described the congruences on $\mathcal{OP}_{n}$ and, 
in~\cite{Fernandes:2001}, Fernandes showed that the rank of the monoid $\mathcal{POI}_{n}$ is equal to $n$ and established a presentation for this monoid.
The classification of maximal subsemigroups of $\mathcal{POI}_{n}$ was obtained by Ganyushkin and Mazorchuk~\cite{Ganyushkin&Mazorchuk:2003}. In~\cite{Fernandes:2000}, Fernandes proved that the rank of $\mathcal{POPI}_{n}$ is equal to $2$, described the congruences and gave a presentation for this monoid. 

\smallskip 

Given a nonempty subset $Y$ of $\Omega_n$, let
\begin{align*}
\mathcal{T}_{n}(Y)&=\{\alpha\in\mathcal{T}_{n}\mid\im(\alpha)\subseteq Y\},\\
\mathcal{PT}_{n}(Y)&=\{\alpha\in\mathcal{PT}_{n}\mid\im(\alpha)\subseteq Y\},\\
\mathcal{I}_{n}(Y)&=\{\alpha\in\mathcal{I}_{n}\mid\im(\alpha)\subseteq Y\},\\
\mathcal{O}_{n}(Y)&=\{\alpha\in\mathcal{O}_{n}\mid\im(\alpha)\subseteq Y\},\\
\mathcal{OP}_{n}(Y)&=\{\alpha\in\mathcal{OP}_{n}\mid\im(\alpha)\subseteq Y\},\\
\mathcal{POI}_{n}(Y)&=\{\alpha\in\mathcal{POI}_{n}\mid\im(\alpha)\subseteq Y\}.
\end{align*}

In 1975, Symons~\cite{Symons:1975} introduced and studied the semigroup $\mathcal{T}_{n}(Y)$. He described all the automorphisms of $\mathcal{T}_{n}(Y)$ and also determined when two semigroups of this type are isomorphic. In~\cite{Nenthein&al:2005}, Nenthein et al. characterized the regular elements of $\mathcal{T}_{n}(Y)$ and, in~\cite{Sanwong&Sommanee:2008}, Sanwong and Sommanee obtained the largest regular subsemigroup of 
$\mathcal{T}_{n}(Y)$ and they also determined a class of maximal inverse subsemigroups of this semigroup. Later, in 2009, all maximal and minimal congruences on $\mathcal{T}_{n}(Y)$ were described by Sanwong et al.~\cite{Sanwong&al:2009}. On the other hand, Sullivan~\cite{Sullivan:2008} consider the linear counterpart of $\mathcal{T}_{n}(Y)$, that is the semigroup which consists of all linear transformations from a vector space $V$ into a fixed subspace $W$ of $V$, and described its Green's relations and ideals.
The rank of the semigroups $\mathcal{T}_{n}(Y)$, $\mathcal{PT}_{n}(Y)$ and $\mathcal{I}_{n}(Y)$ were considered by Fernandes and Sanwong in~\cite{Fernandes&Sanwong:2014}. A description of the regular elements of $\mathcal{O}_{n}(Y)$ was given by Mora and Kemprasit in~\cite{Mora&Kemprasit:2010}. This semigroup was also studied by Fernandes et al. in~\cite{Fernandes&al:2014} who described its largest regular subsemigroup and Green's relations. 
Moreover, also in~\cite{Fernandes&al:2014}, Fernandes et al. determined when two semigroups of the type $\mathcal{O}_{n}(Y)$ are isomorphic and calculated their ranks. Fernandes et al.~\cite{Fernandes&al:2016} describe the largest regular subsemigroup of the semigroup $\mathcal{OP}_{n}(Y)$, they also determine when two semigroups of type $\mathcal{OP}_{n}(Y)$ are isomorphic and calculated the rank of the semigroup $\mathcal{OP}_{n}(Y)$. Recently, in~\cite{Li&al:2023}, Li et al. considered the rank properties of the semigroup $\mathcal{POI}_{n}(Y)$ and they also determined when two semigroups of the type 
$\mathcal{POI}_{n}(Y)$ are isomorphic. For more results about semigroups of transformations with restricted range, for example, see~\cite{Dimitrova&Koppitz:2022,Dimitrova&Koppitz:2021,Tinpun&Koppitz:2016a,Tinpun&Koppitz:2016b}.

\smallskip 

In this paper, for each nonempty subset $Y$ of $\Omega_n$,
we consider the semigroup of transformations with restricted range
$$
\mathcal{POPI}_{n}(Y)=\{\alpha\in\mathcal{POPI}_{n}\mid\im(\alpha)\subseteq Y\}.
$$

This paper is organized as follows. Some preliminary knowledge and notation are given at the end of this section.  
Section~\ref{secb} is dedicated to the study of regularity and Green's relations on $\mathcal{POPI}_{n}(Y)$. 
In Section~\ref{secc},  we determine the cardinality of the semigroup $\mathcal{POPI}_{n}(Y)$ and when two semigroups of the type $\mathcal{POPI}_{n}(Y)$ are isomorphic. In Section~\ref{secd}, we investigate the rank of $\mathcal{POPI}_{n}(Y)$. 

Throughout this paper, $Y$ will always denote a nonempty subset of $\Omega_n$.  

\medskip 

Let $S$ be a semigroup. Denote by $S^{1}$ the monoid obtained from $S$ through the adjoining of an identity if necessary. Recall that the definitions of Green's equivalences $\mathscr{L}, \mathscr{R}, \mathscr{J}, \mathscr{H}$ and $\mathscr{D}$ are defined by: for all $a, b\in S$,
\begin{align*}
(a,b)\in\mathscr {L} \quad &\mbox{if and only if} \quad S^{1}a=S^{1}b,\\
(a,b)\in\mathscr {R} \quad &\mbox{if and only if} \quad aS^{1}=bS^{1},\\
(a,b)\in\mathscr {J} \quad &\mbox{if and only if} \quad S^{1}aS^{1}=S^{1}bS^{1},
\end{align*}
$\mathscr{H}=\mathscr{L}\cap\mathscr{R}$ and $\mathscr{D}=\mathscr{L}\vee\mathscr{R}=\mathscr{L}\circ\mathscr{R}=\mathscr {R}\circ\mathscr{L}$. It is well known that $\mathscr{D}=\mathscr{J}$ in a finite semigroup.  For $\mathscr{K}\in\{\mathscr{L}, \mathscr{R}, \mathscr{J}, \mathscr{H}, \mathscr{D}\}$ and $a\in S$, we denote 
the $\mathscr{K}$-class containing $a$ by $\mathscr{K}_{a}$. 
If necessary, to avoid ambiguity, we also denote 
the Green's equivalence $\mathscr{K}$ and the $\mathscr{K}$-class containing $a$ on the semigroup $S$ by $\mathscr{K}^{S}$  and by $\mathscr{K}^S_{a}$, respectively. 

An element $a$ of $S$ is called \textit{regular} if there exists $b$ in $S$ such that $aba=a$. 
The semigroup $S$ is said to be \textit{regular} if all its elements are regular.

\smallskip 

We refer the reader to the monograph of Howie~\cite{Howie:1995} for any undefined notation and terminology.

\section{Regularity and Green's relations}\label{secb}

In this section, we characterize the regular elements of $\mathcal{POPI}_{n}(Y)$ and Green's relations on 
$\mathcal{POPI}_{n}(Y)$. In addition, we determine when $\mathcal{POPI}_{n}(Y)$ is a regular semigroup. 

Let $\alpha\in \mathcal {PT}_{n}$. We denote by $\dom(\alpha)$ and $\im(\alpha)$ the \textit{domain} and \textit{image} of $\alpha$, respectively. 
Define also $\rank(\alpha)=|\im(\alpha)|$. 

\begin{proposition}\label{regular}
Let $\alpha\in \mathcal{POPI}_{n}(Y)$. Then, the following are equivalent:
\begin{enumerate}[(i)]
\item $\alpha$ is regular;

\item $\im(\alpha)=Y\alpha$;

\item $\dom(\alpha)\subseteq Y$.
\end{enumerate}
\end{proposition}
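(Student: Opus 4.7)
The plan is to prove the cycle $\text{(i)} \Rightarrow \text{(ii)} \Rightarrow \text{(iii)} \Rightarrow \text{(i)}$. The first two implications are short bookkeeping arguments involving images and injectivity; the third is the substantive direction, and I will handle it by writing down an explicit quasi-inverse, namely the set-theoretic inverse $\alpha^{-1}$ of $\alpha$ viewed in $\mathcal{I}_n$.

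For $\text{(i)} \Rightarrow \text{(ii)}$, I would fix $\beta \in \mathcal{POPI}_n(Y)$ with $\alpha\beta\alpha = \alpha$ and use the identity $\im(\alpha\beta\alpha) = (\im(\alpha\beta) \cap \dom(\alpha))\alpha$ together with $\im(\alpha\beta) \subseteq \im(\beta) \subseteq Y$ to conclude $\im(\alpha) \subseteq Y\alpha$; the reverse inclusion $Y\alpha \subseteq \im(\alpha)$ is automatic, so equality holds. For $\text{(ii)} \Rightarrow \text{(iii)}$, I would use injectivity of $\alpha$ to get $|Y\alpha| = |Y \cap \dom(\alpha)|$ and $|\im(\alpha)| = |\dom(\alpha)|$; then (ii) forces $|\dom(\alpha)| = |Y \cap \dom(\alpha)|$, and finiteness of $\dom(\alpha)$ yields $\dom(\alpha) \subseteq Y$.

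For $\text{(iii)} \Rightarrow \text{(i)}$, I would take $\beta = \alpha^{-1} \in \mathcal{I}_n$. Injectivity of $\beta$ is automatic, $\dom(\beta) = \im(\alpha) \subseteq Y$ by the definition of $\mathcal{POPI}_n(Y)$, and $\im(\beta) = \dom(\alpha) \subseteq Y$ by hypothesis. The only point requiring care is that $\beta$ is orientation-preserving. Writing $\dom(\alpha) = \{a_1 < \cdots < a_t\}$, orientation-preservation of $\alpha$ (together with its injectivity) furnishes some $i \in \{0, \ldots, t-1\}$ with $a_{i+1}\alpha < \cdots < a_t\alpha < a_1\alpha < \cdots < a_i\alpha$; listing $\im(\alpha)$ in ascending order as $b_1 < \cdots < b_t$ and applying $\beta$ then yields the sequence $(a_{i+1}, \ldots, a_t, a_1, \ldots, a_i)$, which has at most one descent and is therefore cyclic. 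Hence $\beta \in \mathcal{POPI}_n(Y)$, and regularity follows from $\alpha \alpha^{-1} \alpha = \alpha$.

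The only step with any real content is the verification that the set-theoretic inverse of an orientation-preserving injective partial transformation is again orientation-preserving, which is a small combinatorial check on cyclic sequences; everything else is cardinality counting or chasing definitions through partial-composition formulas.
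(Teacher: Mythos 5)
Your proof is correct and follows essentially the same route as the paper: the quasi-inverse $\alpha^{-1}$ you construct for (iii)$\Rightarrow$(i) is exactly the map $\beta=\left(\begin{smallmatrix}z_{1} &\cdots &z_{m}\\y_{1} &\cdots &y_{m}\end{smallmatrix}\right)$ the paper writes down explicitly, and your (i)$\Rightarrow$(ii) and (ii)$\Rightarrow$(iii) arguments are the same image-chasing and injectivity observations. If anything, you supply slightly more detail than the paper on why the inverse of an injective orientation-preserving map is again orientation-preserving, which the paper leaves as a routine check.
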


\begin{proof}
(i)\,$\Rightarrow$\,(ii) Suppose that $\alpha\in\mathcal{POPI}_{n}(Y)$ is regular. Then, there exists
 some $\beta\in\mathcal{POPI}_{n}(Y)$ such that $\alpha=\alpha\beta\alpha$, and so ${\im}(\alpha)=\Omega_n\alpha=(\Omega_n\alpha\beta)\alpha\subseteq Y\alpha\subseteq \Omega_n\alpha=\im(\alpha)$. Hence, ${\im}(\alpha)=Y\alpha$.

(ii)\,$\Rightarrow$\,(iii) Since ${\im}(\alpha)=Y\alpha$ and $\alpha$ is injective, it follows that ${\dom}(\alpha)\subseteq Y$.

(iii)\,$\Rightarrow$\,(i) Suppose that ${\dom}(\alpha)\subseteq Y$. If $\dom(\alpha)=\emptyset$, then $\alpha=\varnothing$ and so it is clearly regular.  If $\dom(\alpha)\neq\emptyset$, then ${\im}(\alpha) \neq\emptyset$. Let ${\im}(\alpha)=\{z_{1}<\cdots<z_{m}\}\subseteq Y$. Then, there exist $y_{1},\ldots,y_{m}\in Y$ such that $y_{i}\alpha=z_{i}$ for $1\leqslant i\leqslant m$, and the sequence $(y_{1},y_{2},\ldots,y_{m})$ is cyclic.
Let
$
\beta=\transf{z_{1} &z_{2} &\cdots &z_{m}\\y_{1} &y_{2} &\cdots &y_{m}}.
$
Clearly, $\beta\in\mathcal{POPI}_{n}(Y)$. Then, it is routine to verify that $\alpha=\alpha\beta\alpha$, whence $\alpha$ is regular.
\end{proof}

The next result determines when $\mathcal{POPI}_{n}(Y)$ is a regular semigroup.

\begin{corollary}\label{coro:regular}
Let $Y$ be a nonempty subset of $\Omega_n$. Then, $\mathcal{POPI}_{n}(Y)$ is regular if and only if $Y=\Omega_n$.
\end{corollary}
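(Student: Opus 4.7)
My plan is to deduce this corollary directly from Proposition \ref{regular}, which characterizes regular elements of $\mathcal{POPI}_n(Y)$ as those partial permutations whose domain is contained in $Y$. This reduces the statement to a simple observation about domains of elements of $\mathcal{POPI}_n(Y)$.

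For the sufficiency direction ($Y = \Omega_n$ implies regularity), I would argue that any $\alpha \in \mathcal{POPI}_n(Y) = \mathcal{POPI}_n$ satisfies $\dom(\alpha) \subseteq \Omega_n = Y$ trivially, so by Proposition \ref{regular}, $\alpha$ is regular. Hence the whole semigroup is regular.

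For the necessity direction (regularity implies $Y = \Omega_n$), I would argue by contrapositive. Assuming $Y \neq \Omega_n$, I would pick an element $x \in \Omega_n \setminus Y$ and an element $y \in Y$ (which exists since $Y$ is nonempty), and consider the partial permutation $\alpha$ with $\dom(\alpha) = \{x\}$ and $x\alpha = y$. This element lies in $\mathcal{POPI}_n(Y)$ since it is injective, its image $\{y\}$ is contained in $Y$, and the cyclic condition is vacuous for a singleton sequence. However, $\dom(\alpha) = \{x\} \not\subseteq Y$, so Proposition \ref{regular} implies $\alpha$ is not regular, contradicting the regularity of $\mathcal{POPI}_n(Y)$.

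There is no real obstacle here: the work has already been done in Proposition \ref{regular}, and the only creative step is exhibiting a simple non-regular rank-one witness when $Y$ is a proper subset of $\Omega_n$.
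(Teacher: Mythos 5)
Your proposal is correct and follows essentially the same route as the paper: both directions reduce to Proposition~\ref{regular}, with the necessity argument using exactly the same rank-one witness $\transf{x\\y}$ for $x\in\Omega_n\setminus Y$ and $y\in Y$. No gaps.
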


\begin{proof}
If $Y=\Omega_n$, then for any $\alpha\in \mathcal{POPI}_{n}(Y)$, ${\dom}(\alpha)\subseteq Y$. It follows from Proposition~\ref{regular} that $\alpha$ is regular, whence, $\mathcal{POPI}_{n}(Y)$ is regular (notice that, in~\cite{Fernandes:2000}, it was already proved that $\mathcal{POPI}_{n}$ is regular). 
If $Y \ne \Omega_n$, then there exist elements $x\in \Omega_n\backslash Y$ and $y\in Y$. Clearly, $\alpha=\transf{x\\y}\in \mathcal{POPI}_{n}(Y)$ and 
${\dom}(\alpha)\nsubseteq Y$, whence $\alpha$ is not regular in $\mathcal{POPI}_{n}(Y)$, by Proposition~\ref{regular}. Thus, $\mathcal{POPI}_{n}(Y)$ is not regular.
\end{proof}

Recall the following result from \cite{Fernandes:2000}:

\begin{lemma}\cite[Propositions 2.4, 2.5 and 2.6]{Fernandes:2000} \label{GPOPI}
Let $\alpha,\beta\in \mathcal{POPI}_{n}$. Then: 
\begin{enumerate}[(i)]
\item $(\alpha, \beta)\in\mathscr{L}$  if and only if $\im(\alpha)=\im(\beta)$; 
\item $(\alpha, \beta)\in\mathscr{R}$  if and only if $\dom(\alpha)=\dom(\beta)$;
\item Let $\alpha\in\mathcal{POPI}_{n}$ be such that $1\leq\rank(\alpha)=m\leqslant n$. Then, $|\mathscr{H}_{\alpha}|=m$. Moreover, if $\alpha$ is an idempotent, then $\mathscr{H}_{\alpha}$ is a cyclic group of order $m$;
\item $(\alpha, \beta)\in\mathscr{D}$  if and only if $\rank(\alpha)=\rank(\beta)$.
\end{enumerate}
\end{lemma}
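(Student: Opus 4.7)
The plan is to exploit the fact that $\mathcal{POPI}_n$ is an \emph{inverse} submonoid of $\mathcal{I}_n$. The preliminary (and most delicate) step is to verify that $\alpha \in \mathcal{POPI}_n$ implies $\alpha^{-1} \in \mathcal{POPI}_n$. Writing $\dom(\alpha) = \{a_1 < \cdots < a_t\}$, the cyclicity assumption furnishes an index $i$ for which $a_{i+1}\alpha < \cdots < a_t\alpha < a_1\alpha < \cdots < a_i\alpha$; listing $\dom(\alpha^{-1}) = \im(\alpha)$ in this increasing order, the corresponding image sequence under $\alpha^{-1}$ reads $(a_{i+1}, \ldots, a_t, a_1, \ldots, a_i)$, which has at most one descent and is therefore cyclic. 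So each $\alpha$ has an inverse $\alpha^{-1}$ \emph{inside} $\mathcal{POPI}_n$, and the standard inverse-semigroup identities are all at hand.

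Parts (i) and (ii) then follow from the usual characterizations of $\mathscr{L}$ and $\mathscr{R}$ in inverse semigroups. The ``only if'' direction of (i) is the routine observation that $\alpha = \gamma\beta$ forces $\im(\alpha)\subseteq\im(\beta)$. For the ``if'' direction, when $\im(\alpha) = \im(\beta)$, both $\alpha^{-1}\alpha$ and $\beta^{-1}\beta$ act as the partial identity on that common image, so $\alpha = \alpha(\alpha^{-1}\alpha) = \alpha(\beta^{-1}\beta) = (\alpha\beta^{-1})\beta$ with $\alpha\beta^{-1}\in\mathcal{POPI}_n$, and symmetrically for $\beta$. Part (ii) is entirely dual, using $\alpha\alpha^{-1}$ and $\beta\beta^{-1}$.

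For (iv), the forward direction is automatic from (i) and (ii): $\alpha\mathscr{L}\gamma\mathscr{R}\beta$ forces $\rank(\alpha)=|\im(\gamma)|=|\dom(\gamma)|=\rank(\beta)$. For the converse, given $\rank(\alpha)=\rank(\beta)=m$, I would build a bridging element: list $\dom(\beta)=\{d_1<\cdots<d_m\}$ and $\im(\alpha)=\{e_1<\cdots<e_m\}$ and let $\gamma$ be the order-preserving bijection $d_j\mapsto e_j$. Being order-preserving, this $\gamma$ is orientation-preserving and hence lies in $\mathcal{POPI}_n$; by (i)--(ii) it satisfies $\alpha\mathscr{L}\gamma\mathscr{R}\beta$, yielding $\alpha\mathscr{D}\beta$.

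Finally, for (iii), combining (i) and (ii) identifies $\mathscr{H}_\alpha$ with the set of orientation-preserving bijections from $\dom(\alpha)$ onto $\im(\alpha)$. The key combinatorial point is that a bijection between two ordered $m$-element sets is orientation-preserving precisely when the image sequence is a cyclic rotation of the increasing enumeration of the target; there are exactly $m$ such rotations, giving $|\mathscr{H}_\alpha|=m$. If moreover $\alpha$ is idempotent, then $\dom(\alpha)=\im(\alpha)$ and $\alpha$ is the partial identity on this set, so the $m$ elements of $\mathscr{H}_\alpha$ are the rotations of this common $m$-set, which form a cyclic group of order $m$ under composition (generated by the single-step rotation). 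The only genuinely delicate step in the whole argument is the opening inverse-closure verification; thereafter the proof rides on standard inverse-semigroup Green's-relation theory combined with a direct count of cyclic arrangements.
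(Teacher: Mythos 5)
Your proof is correct. Note that the paper does not actually prove this lemma --- it is imported verbatim from Fernandes (2000, Propositions 2.4--2.6) --- so there is no internal argument to compare against; your route is the standard one and is sound. The two steps that genuinely depend on the wrap-around convention in the definition of a cyclic sequence both check out: the inverse of a cyclic image sequence is again cyclic (your opening closure argument, which makes $\mathcal{POPI}_n$ an inverse subsemigroup of $\mathcal{I}_n$ and puts $\alpha^{-1}\alpha=\mathrm{id}_{\im(\alpha)}$ and $\alpha\alpha^{-1}=\mathrm{id}_{\dom(\alpha)}$ at your disposal for (i), (ii) and (iv)), and an increasing image sequence has its unique descent at the wrap, so the order-preserving bridging element in (iv) is indeed orientation-preserving. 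The identification of $\mathscr{H}_\alpha$ with the $m$ cyclic rotations of the increasing enumeration of $\im(\alpha)$, and hence the cyclic group structure in the idempotent case, is likewise correct.
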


The following theorem describes the Green's relations $\mathscr{L}$, $\mathscr{R}$ and $\mathscr{H}$ on $\mathcal{POPI}_{n}(Y)$. 
Let us denote the partial identity transformation on a subset $A$ of $\Omega_n$ by $\mathrm{id}_{A}$.  
Clearly, if $A\subseteq Y$, then $\mathrm{id}_{A}\in\mathcal{POPI}_{n}(Y)$

\begin{theorem}\label{green}
Let $\alpha,\beta\in \mathcal{POPI}_{n}(Y)$. Then
\begin{enumerate}[(i)]
\item $(\alpha, \beta)\in\mathscr{L}$  if and only if either both $\alpha$ and $\beta$ are regular and $\im(\alpha)=\im(\beta)$, or $\alpha=\beta$.
\item $(\alpha, \beta)\in\mathscr{R}$  if and only if $\dom(\alpha)=\dom(\beta)$.
\item Let $\alpha\in\mathcal{POPI}_{n}(Y)$ be such that $\rank(\alpha)=m$. If $\alpha$ is not regular, then $|\mathscr{H}_{\alpha}|=1$. If $\alpha$ is regular, then $|\mathscr{H}_{\alpha}|=m$ and all the elements of $\mathscr{H}_{\alpha}$ have the same domain and image. Moreover, if $\alpha$ is an idempotent, then $\mathscr{H}_{\alpha}$ is a cyclic group of order $m$.
\end{enumerate}
\end{theorem}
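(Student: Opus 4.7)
My plan is to lift Lemma 1.2 from $\mathcal{POPI}_n$ to $\mathcal{POPI}_n(Y)$ using two ingredients. First, $\mathcal{POPI}_n$ is an inverse subsemigroup of $\mathcal{I}_n$ (orientation-preservation is stable under taking inverses), so for every $\alpha \in \mathcal{POPI}_n(Y)$ the inverse $\alpha^{-1}$ is available inside $\mathcal{POPI}_n$. Second, since $\mathcal{POPI}_n(Y)$ is a subsemigroup of $\mathcal{POPI}_n$, one has $\mathscr{L}^{\mathcal{POPI}_n(Y)}\subseteq\mathscr{L}^{\mathcal{POPI}_n}$ and $\mathscr{R}^{\mathcal{POPI}_n(Y)}\subseteq\mathscr{R}^{\mathcal{POPI}_n}$, which together with Lemma 1.2 supplies the ``only if'' direction of (ii) and the image-equality in (i) essentially for free.

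For (ii), assume $\dom(\alpha)=\dom(\beta)$ and set $\delta=\alpha^{-1}\beta$ and $\varepsilon=\beta^{-1}\alpha$, computed inside the inverse semigroup $\mathcal{POPI}_n$. Using the identities $\alpha\alpha^{-1}=\mathrm{id}_{\dom(\alpha)}=\mathrm{id}_{\dom(\beta)}$ and $\beta\beta^{-1}=\mathrm{id}_{\dom(\beta)}$, a direct check gives $\alpha\delta=\beta$ and $\beta\varepsilon=\alpha$; moreover $\im(\delta)=\im(\beta)\subseteq Y$ and $\im(\varepsilon)=\im(\alpha)\subseteq Y$, so both witnesses lie in $\mathcal{POPI}_n(Y)$.

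For (i), the ``only if'' must disentangle the $\alpha=\beta$ branch, which arises precisely when the equations $\gamma_1\alpha=\beta$ and $\gamma_2\beta=\alpha$ are realised by the adjoined identity of $\mathcal{POPI}_n(Y)^1$. When $\gamma_1,\gamma_2$ genuinely lie in $\mathcal{POPI}_n(Y)$, every element of $\im(\beta)=\im(\gamma_1\alpha)$ has the form $(y)\alpha$ with $y\in\im(\gamma_1)\subseteq Y$, forcing $\im(\beta)\subseteq Y\alpha$; combined with $\im(\alpha)=\im(\beta)$ coming from Lemma 1.2(i), this yields $\im(\alpha)=Y\alpha$, so Proposition 2.1 gives regularity of $\alpha$, and symmetrically of $\beta$. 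For the converse, when $\alpha,\beta$ are regular with $\im(\alpha)=\im(\beta)$, Proposition 2.1 gives $\dom(\alpha),\dom(\beta)\subseteq Y$; the elements $\gamma_1=\beta\alpha^{-1}$ and $\gamma_2=\alpha\beta^{-1}$ of $\mathcal{POPI}_n$ then have images $\dom(\alpha)\subseteq Y$ and $\dom(\beta)\subseteq Y$ respectively, hence lie in $\mathcal{POPI}_n(Y)$, and a short computation based on $\alpha^{-1}\alpha=\mathrm{id}_{\im(\alpha)}$ produces $\gamma_1\alpha=\beta$ and $\gamma_2\beta=\alpha$.

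For (iii), if $\alpha$ is not regular then (i) forces $\mathscr{L}_\alpha=\{\alpha\}$ and hence $|\mathscr{H}_\alpha|=1$. If $\alpha$ is regular, combining (i) and (ii) identifies $\mathscr{H}_\alpha^{\mathcal{POPI}_n(Y)}$ with the set of $\beta\in\mathcal{POPI}_n$ sharing the domain and image of $\alpha$; any such $\beta$ automatically lies in $\mathcal{POPI}_n(Y)$ (its image sits in $Y$) and is regular (its domain sits in $Y$), so $\mathscr{H}_\alpha^{\mathcal{POPI}_n(Y)}=\mathscr{H}_\alpha^{\mathcal{POPI}_n}$ as sets, and Lemma 1.2(iii) transfers both the cardinality $m$ and the cyclic group structure at idempotents. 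The main obstacle is the $\mathscr{L}$-direction of (i): one has to separate the adjoined-identity contribution from the genuine factorisations, and to manufacture witnesses living inside the range-restricted semigroup by using inverses that a priori live only in the larger $\mathcal{POPI}_n$.
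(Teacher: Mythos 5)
Your proof is correct and follows essentially the same strategy as the paper's: the ``only if'' directions come from restricting Lemma~\ref{GPOPI} down to the subsemigroup, the regular/non-regular dichotomy in (i) is settled via Proposition~\ref{regular} exactly as in the paper, and (iii) is obtained by identifying $\mathscr{H}^{\mathcal{POPI}_{n}(Y)}_{\alpha}$ with $\mathscr{H}^{\mathcal{POPI}_{n}}_{\alpha}$ in the regular case. The only (cosmetic) difference is in how the converse witnesses are pushed into $\mathcal{POPI}_{n}(Y)$: you use the explicit inverse-semigroup elements $\alpha^{-1}\beta$, $\beta^{-1}\alpha$, $\beta\alpha^{-1}$, $\alpha\beta^{-1}$, whereas the paper modifies the abstract witnesses from $\mathcal{POPI}_{n}$ by composing with $\mathrm{id}_{\im(\alpha)}$ and with inner inverses $\alpha'$, $\beta'$.
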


\begin{proof}
(i) Suppose that $(\alpha, \beta)\in\mathscr{L}^{\mathcal{POPI}_{n}(Y)}$. Then, 
there exist $\gamma, \delta\in\mathcal{POPI}_{n}(Y)^{1}$ such that $\alpha=\gamma\beta$ and $\beta=\delta\alpha$. 
Suppose that $\alpha$ is regular. If $\gamma=1$ or $\delta=1$, then $\alpha=\beta$, and so both $\alpha$ and $\beta$ are regular and $\im(\alpha)=\im(\beta)$; if $\gamma,\delta\neq1$, then $\Omega_n\alpha=\Omega_n\gamma\beta\subseteq Y\beta\subseteq \Omega_n\beta=\Omega_n\delta\alpha\subseteq Y\alpha\subseteq \Omega_n\alpha$ and so ${\im}(\alpha)=\Omega_n\alpha=Y\alpha=Y\beta=\Omega_n\beta=\im(\beta)$. Hence, both $\alpha$ and $\beta$ are regular and $\im(\alpha)=\im(\beta)$. Suppose that $\alpha$ is not regular. If $\gamma,\delta\neq1$, then $\Omega_n\alpha=\Omega_n\gamma\beta=\Omega_n\gamma\delta\alpha\subseteq Y\alpha\subseteq \Omega_n\alpha$ and so ${\im}(\alpha)=\Omega_n\alpha=Y\alpha$. Hence, $\alpha$ is regular by Theorem~\ref{regular}, a contradiction. Hence, $\gamma=1$ or $\delta=1$, and so $\alpha=\beta$.

Conversely, if $\alpha=\beta$, then trivially $(\alpha, \beta)\in\mathscr{L}^{\mathcal{POPI}_{n}(Y)}$. 
So, suppose that $\alpha$ and $\beta$ are regular and $\im(\alpha)=\im(\beta)$. 
Then, in particular, $(\alpha, \beta)\in\mathscr {L}^{\mathcal{POPI}_{n}}$, by Lemma~\ref{GPOPI}(i). 
Let $\gamma, \delta\in\mathcal{POPI}_{n}$ be such that $\alpha=\gamma\beta$ and $\beta=\delta\alpha$. 
Let $\alpha', \beta'\in\mathcal{POPI}_{n}(Y)$ be such that $\alpha=\alpha\alpha'\alpha$ and $\beta=\beta\beta'\beta$. 
Then $\alpha=\gamma\beta=\gamma\beta\beta'\beta$ and, clearly,  $\gamma\beta\beta'\in \mathcal{POPI}_{n}(Y)$. 
Similarly, $\beta=\delta\alpha\alpha'\alpha$ and $\delta\alpha\alpha'\in\mathcal{POPI}_{n}(Y)$. 
Thus, $(\alpha, \beta)\in\mathscr{L}^{\mathcal{POPI}_{n}(Y)}$.

\smallskip 

(ii) If $(\alpha, \beta)\in\mathscr{R}^{\mathcal{POPI}_{n}(Y)}$, then $(\alpha, \beta)\in\mathscr{R}^{\mathcal{POPI}_{n}}$, and so 
so $\dom(\alpha)=\dom(\beta)$, by Lemma~\ref{GPOPI}(ii). 

Conversely, suppose that $\dom(\alpha)=\dom(\beta)$. Then, $(\alpha, \beta)\in\mathscr{R}^{\mathcal{POPI}_{n}}$, by Lemma~\ref{GPOPI}(ii). 
Hence, $\alpha=\beta\gamma$ and $\beta=\alpha\delta$ for some $\gamma, \delta\in\mathcal{POPI}_{n}$. 
Clearly, $\alpha=\alpha\mathrm{id}_{\im(\alpha)}$ and $\beta=\beta\mathrm{id}_{\im(\beta)}$, whence 
$\alpha=\beta\gamma\mathrm{id}_{\im(\alpha)}$ and $\beta=\alpha\delta\mathrm{id}_{\im(\beta)}$. 
Since, clearly,  $\gamma\mathrm{id}_{\im(\alpha)},\delta\mathrm{id}_{\im(\beta)}\in\mathcal{POPI}_{n}(Y)$, 
we get $(\alpha, \beta)\in\mathscr{R}^{\mathcal{POPI}_{n}(Y)}$. 

\smallskip 

(iii) Let $\alpha\in\mathcal{POPI}_{n}(Y)$ be such that $\rank(\alpha)=m$. 
If $\alpha$ is not regular, then by (i) we have $|\mathscr{L}_{\alpha}|=1$, and so $|\mathscr{H}_{\alpha}|=1$. 
Therefore, suppose that $\alpha$ is regular. 
Then, for any $\gamma\in\mathscr{H}^{\mathcal{POPI}_{n}}_{\alpha}$, by Proposition~\ref{regular} and Lemma~\ref{GPOPI}, 
we have $\dom(\gamma)=\dom(\alpha)\subseteq Y$ and ${\im}(\gamma)={\im}(\alpha)\subseteq Y$. 
Hence, $\gamma\in\mathcal{POPI}_{n}(Y)$ and $\gamma\in\mathscr{H}^{\mathcal{POPI}_{n}(Y)}_{\alpha}$, by (i) and (ii). 
So $\mathscr{H}^{\mathcal{POPI}_{n}}_{\alpha}\subseteq\mathscr{H}^{\mathcal{POPI}_{n}(Y)}_{\alpha}$ and, 
as trivially $\mathscr{H}^{\mathcal{POPI}_{n}(Y)}_{\alpha}\subseteq\mathscr{H}^{\mathcal{POPI}_{n}}_{\alpha}$, 
we get $\mathscr{H}^{\mathcal{POPI}_{n}(Y)}_{\alpha}=\mathscr{H}^{\mathcal{POPI}_{n}}_{\alpha}$. Thus, by Lemma~\ref{GPOPI}(iii),  
$|\mathscr{H}^{\mathcal{POPI}_{n}(Y)}_{\alpha}|=m$, all the elements of $\mathscr{H}^{\mathcal{POPI}_{n}(Y)}_{\alpha}$ have the same domain and image, and 
if $\alpha$ is an idempotent, then $\mathscr{H}^{\mathcal{POPI}_{n}(Y)}_{\alpha}$ is a cyclic group of order $m$.
\end{proof}

Our next theorem gives a characterization of the Green's relation $\mathscr{D}=\mathscr{J}$ on $\mathcal{POPI}_{n}(Y)$.

\begin{theorem}\label{green D}
Let $\alpha,\beta\in \mathcal{POPI}_{n}(Y)$. Then, $(\alpha, \beta)\in\mathscr{D}$  if and only if either (1) both $\alpha$ and $\beta$ are regular and $\rank(\alpha)=\rank(\beta)$  or (2) both $\alpha$ and $\beta$ are not regular and $\dom(\alpha)=\dom(\beta)$.
\end{theorem}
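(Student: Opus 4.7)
The plan is to combine $\mathscr{D}=\mathscr{L}\circ\mathscr{R}$ with the descriptions of $\mathscr{L}$ and $\mathscr{R}$ from Theorem~\ref{green}, together with the regularity criterion of Proposition~\ref{regular} (regularity is controlled solely by $\dom(\alpha)\subseteq Y$). A useful preliminary observation is that if $\alpha\mathscr{R}\beta$ in $\mathcal{POPI}_{n}(Y)$ then $\alpha$ is regular if and only if $\beta$ is regular, since Theorem~\ref{green}(ii) gives $\dom(\alpha)=\dom(\beta)$ and regularity depends only on whether this common domain lies in $Y$. Likewise, if $\alpha\mathscr{L}\beta$ then either $\alpha=\beta$ or both are regular, so again the two are simultaneously regular or simultaneously non-regular. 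Moreover, for $\alpha,\beta\in\mathcal{POPI}_{n}(Y)$ with $\dom(\alpha)=\dom(\beta)$ we automatically have $\rank(\alpha)=\rank(\beta)$, by injectivity.

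For the forward direction, assume $(\alpha,\beta)\in\mathscr{D}$ and pick $\gamma\in\mathcal{POPI}_{n}(Y)$ with $\alpha\mathscr{L}\gamma$ and $\gamma\mathscr{R}\beta$. By the preliminary observation $\alpha$, $\gamma$, $\beta$ are either all regular or all non-regular. If they are all regular, then Theorem~\ref{green}(i) gives $\im(\alpha)=\im(\gamma)$ while Theorem~\ref{green}(ii) gives $\dom(\gamma)=\dom(\beta)$, so $\rank(\alpha)=\rank(\gamma)=\rank(\beta)$, yielding case (1). If they are all non-regular, then Theorem~\ref{green}(i) forces $\alpha=\gamma$ (the only admissible option in the non-regular $\mathscr{L}$-clause), so $\dom(\alpha)=\dom(\gamma)=\dom(\beta)$, yielding case (2).

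For the converse, case (2) is trivial because $\dom(\alpha)=\dom(\beta)$ already gives $(\alpha,\beta)\in\mathscr{R}\subseteq\mathscr{D}$ via Theorem~\ref{green}(ii). In case (1) both $\alpha,\beta$ are regular of rank $m$, so by Proposition~\ref{regular} we have $\dom(\alpha),\dom(\beta)\subseteq Y$ and $\im(\alpha),\im(\beta)\subseteq Y$. Writing $\dom(\beta)=\{a_1<\cdots<a_m\}$ and $\im(\alpha)=\{c_1<\cdots<c_m\}$, define $\gamma$ by $a_i\mapsto c_i$ for $1\leqslant i\leqslant m$. This $\gamma$ is order-preserving (hence orientation-preserving) and injective, with $\dom(\gamma)\subseteq Y$ and $\im(\gamma)\subseteq Y$, so $\gamma\in\mathcal{POPI}_{n}(Y)$ and $\gamma$ is regular by Proposition~\ref{regular}. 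Then $\im(\gamma)=\im(\alpha)$ together with the regularity of $\alpha$ and $\gamma$ yields $\alpha\mathscr{L}\gamma$ by Theorem~\ref{green}(i), and $\dom(\gamma)=\dom(\beta)$ yields $\gamma\mathscr{R}\beta$ by Theorem~\ref{green}(ii); hence $(\alpha,\beta)\in\mathscr{D}$.

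The main subtlety is not in the construction of the connecting element $\gamma$ (any order-preserving bijection works since every order-preserving map is orientation-preserving), but rather in carefully tracking how the non-regular branch of the $\mathscr{L}$-description in Theorem~\ref{green}(i) interacts with $\mathscr{R}$; the key insight is that regularity is transferred along both $\mathscr{L}$ and $\mathscr{R}$, so a non-regular $\alpha$ can only be $\mathscr{D}$-related to non-regular elements, and for those the $\mathscr{L}$-class is a singleton, collapsing $\mathscr{D}$ to $\mathscr{R}$ on the non-regular part.
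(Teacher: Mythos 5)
Your proof is correct. The forward direction matches the paper's argument in all essentials: both decompose $\mathscr{D}$ as $\mathscr{L}\circ\mathscr{R}$, observe that regularity is constant along $\mathscr{L}$ and $\mathscr{R}$ (you derive this from Theorem~\ref{green}; the paper invokes the general semigroup fact that $\mathscr{D}$-related elements are simultaneously regular or not), and use the collapse of non-regular $\mathscr{L}$-classes to singletons to reduce case (2) to equality of domains. The only genuine divergence is in the converse for case (1): the paper lifts to the ambient semigroup $\mathcal{POPI}_{n}$, uses Lemma~\ref{GPOPI}(iv) to get $(\alpha,\beta)\in\mathscr{D}^{\mathcal{POPI}_{n}}$, extracts a connecting element $\gamma$ there, and then verifies that $\gamma$ lands in $\mathcal{POPI}_{n}(Y)$ and that the $\mathscr{L}$- and $\mathscr{R}$-links descend (which requires an extra step to see that $\gamma$ is regular, transferred from $\beta$ via the common domain). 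You instead build the connecting element explicitly as the order-preserving bijection from $\dom(\beta)$ onto $\im(\alpha)$, which is immediately in $\mathcal{POPI}_{n}(Y)$ and immediately regular since $\dom(\beta)\subseteq Y$ by Proposition~\ref{regular}. Your route is slightly more self-contained (it avoids Lemma~\ref{GPOPI}(iv) entirely in the converse) at the cost of an explicit construction; the paper's route recycles the known structure of $\mathcal{POPI}_{n}$ and generalizes more readily to settings where an explicit canonical witness is less obvious. Both are complete and correct.
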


\begin{proof}
Suppose that $(\alpha, \beta)\in\mathscr{D}^{\mathcal{POPI}_{n}(Y)}$. Then, $(\alpha, \beta)\in\mathscr{D}^{\mathcal{POPI}_{n}}$, and so, 
by Lemma~\ref{GPOPI}(iv), we get $\rank(\alpha)=\rank(\beta)$. 
Since any two $\mathscr{D}$-related elements (in any semigroup) are both regular or both not regular, 
then either both $\alpha$ and $\beta$ are regular or both $\alpha$ and $\beta$ are not regular. 
If both $\alpha$ and $\beta$ are regular, then we have (1). 
So, suppose that both $\alpha$ and $\beta$ are not regular and let 
$\gamma\in\mathcal{POPI}_{n}(Y)$ be such that $(\alpha, \gamma)\in\mathscr {L}$ and $(\gamma, \beta)\in\mathscr{R}$. 
Hence, by Theorem~\ref{green}, it follows that $\alpha=\gamma$ and $\dom(\gamma)=\dom(\beta)$. Thus, $\dom(\alpha)=\dom(\beta)$.

Conversely, if both $\alpha$ and $\beta$ are not regular and $\dom(\alpha)=\dom(\beta)$, then $(\alpha, \beta)\in\mathscr{R}^{\mathcal{POPI}_{n}(Y)}$, 
by Theorem~\ref{green}(ii), and so $(\alpha, \beta)\in\mathscr{D}^{\mathcal{POPI}_{n}(Y)}$. 
On the other hand, suppose both $\alpha$ and $\beta$ are regular and ${\rank}(\alpha)={\rank}(\beta)$. 
Then, $(\alpha, \beta)\in\mathscr{D}^{\mathcal{POPI}_{n}}$, by Lemma~\ref{GPOPI}(iv). 
Let $\gamma\in\mathcal{POPI}_{n}$ be such that $(\alpha, \gamma)\in\mathscr{L}^{\mathcal{POPI}_{n}}$ 
and $(\gamma, \beta)\in\mathscr{R}^{\mathcal{POPI}_{n}}$. 
Hence, by Lemma~\ref{GPOPI}(i), $\im(\gamma)=\im(\alpha)$ and so $\gamma\in\mathcal{POPI}_{n}(Y)$. 
On the other hand, by Lemma~\ref{GPOPI}(ii), $\dom(\gamma)=\dom(\beta)$, 
and so, by Theorem~\ref{green}, $(\gamma, \beta)\in\mathscr{R}^{\mathcal{POPI}_n(Y)}$. 
Thus, as $\beta$ is regular, $\gamma$ must be regular too, and so we have that both $\alpha$ and $\gamma$ are regular with $\im(\gamma)=\im(\alpha)$,
from which follows, by Theorem~\ref{green}, that $(\alpha, \gamma)\in\mathscr{L}^{\mathcal{POPI}_{n}(Y)}$. 
Therefore, $(\alpha, \beta)\in\mathscr{D}^{\mathcal{POPI}_{n}(Y)}$, as required. 
\end{proof}

\section{Size and an isomorphism theorem}\label{secc}

Let $Y$ be a subset of $\Omega_{n}$ of size $r$, with $1\leqslant r \leqslant n$. 
In this section, the cardinality of the semigroup $\mathcal{POPI}_{n}(Y)$ is determined, and an isomorphism theorem for semigroup of the type
$\mathcal{POPI}_{n}(Y)$ is given.

Notice that, as the elements of the semigroup $\mathcal{POPI}_{n}(Y)$ are injective orientation-preserving partial transformations, 
for any nonempty sets $A\subseteq \Omega_{n}$ and $B\subseteq Y$  with $|A|=|B|$, clearly, we have 
$$
|\{\alpha\in\mathcal{POPI}_{n}(Y)\,|\,\dom(\alpha)=A, \im(\alpha)=B\}|=|B|.
$$ 

Let $J_{k}=\{\alpha\in\mathcal{POPI}_{n}(Y)\,|\,\rank(\alpha)=k\}$ for $1\leqslant k\leqslant r$. Since the numbers of distinct domains contained in $\Omega_{n}$ of size $k$ and distinct images contained in $Y$ of size $k$ are $\tbinom{n}{k}$ and $\tbinom{r}{k}$, respectively,
it follows that $|J_{k}|=k\tbinom{n}{k}\tbinom{r}{k}$. As $\mathcal{POPI}_{n}(Y)$ also contains the empty transformation, we obtain 
\[
|\mathcal{POPI}_{n}(Y)|=1+\sum\limits ^{r}_{k=1}|J_{k}|=1+\sum\limits ^{r}_{k=1}k\tbinom{n}{k}\tbinom{r}{k}.
\]
Now, considering the Combinatorial Identity (3.30) in~\cite[p.25]{Gould:1972}, i.e. 
$$
\sum\limits ^{r}_{k=1}k\tbinom{r}{k}\tbinom{n}{k}=r\tbinom{n+r-1}{r},
$$
we conclude the following result.

\begin{proposition}
The cardinality of $\mathcal{POPI}_{n}(Y)$ is equal to $1+r\tbinom{n+r-1}{r}$.
\end{proposition}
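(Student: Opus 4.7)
The plan is essentially to execute the counting already sketched in the discussion preceding the statement, so the proof is a direct combinatorial computation rather than a structural argument. First, I would partition the nonzero elements of $\mathcal{POPI}_{n}(Y)$ by rank, setting $J_{k}=\{\alpha\in\mathcal{POPI}_{n}(Y)\mid\rank(\alpha)=k\}$ for $1\leqslant k\leqslant r$, and separately account for the empty transformation, which contributes $1$ to the total.

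Second, for a fixed rank $k$, I would count $|J_{k}|$ by specifying in turn the domain, the image, and the bijection between them. Since $\dom(\alpha)\subseteq\Omega_{n}$ and $\im(\alpha)\subseteq Y$ with $|\dom(\alpha)|=|\im(\alpha)|=k$, there are $\binom{n}{k}\binom{r}{k}$ choices for the pair (domain, image). The key combinatorial point, which justifies the count displayed in the excerpt, is that given $A=\{a_{1}<\cdots<a_{k}\}\subseteq\Omega_n$ and $B=\{b_{1}<\cdots<b_{k}\}\subseteq Y$, an orientation-preserving bijection $f\colon A\to B$ is uniquely determined by the image of $a_{1}$: indeed, if $a_{1}f=b_{i}$, then cyclicity of $(a_{1}f,\ldots,a_{k}f)$ forces the sequence to be $(b_{i},b_{i+1},\ldots,b_{k},b_{1},\ldots,b_{i-1})$, and conversely each of the $k$ cyclic rotations of $(b_{1},\ldots,b_{k})$ yields a valid orientation-preserving bijection. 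Hence $|J_{k}|=k\binom{n}{k}\binom{r}{k}$.

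Third, summing over $k$ and adding the empty transformation gives
\[
|\mathcal{POPI}_{n}(Y)|=1+\sum_{k=1}^{r}k\binom{n}{k}\binom{r}{k},
\]
and the combinatorial identity $\sum_{k=1}^{r}k\binom{r}{k}\binom{n}{k}=r\binom{n+r-1}{r}$ quoted from~\cite{Gould:1972} immediately yields the claimed formula $1+r\binom{n+r-1}{r}$. The only step requiring genuine verification is the count of $k$ orientation-preserving bijections between two $k$-subsets described above; the rest is routine bookkeeping. If one wanted to avoid citing Gould, a short self-contained derivation of the identity proceeds via $k\binom{r}{k}=r\binom{r-1}{k-1}$ and then Chu--Vandermonde applied to $\sum_{j\geqslant 0}\binom{r-1}{j}\binom{n}{j+1}=\binom{n+r-1}{r}$.
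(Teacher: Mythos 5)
Your proof is correct and follows essentially the same route as the paper: partition the nonidentity elements by rank, count each $J_k$ as $k\binom{n}{k}\binom{r}{k}$ using the fact that exactly $k$ cyclic rotations give the orientation-preserving bijections between fixed $k$-subsets, add $1$ for the empty map, and invoke Gould's identity. Your explicit justification of the ``$k$ bijections'' count and the optional Chu--Vandermonde derivation are correct refinements of steps the paper treats as immediate.
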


\smallskip 

Let us denote by $E(S)$ the set of idempotents of a semigroup $S$. 

For any $\alpha\in\mathcal{PT}_{n}$, denote the set of fixed points of $\alpha$ by $\fix(\alpha)$, i.e.
$\fix(\alpha)=\{ x\in\dom(\alpha)\mid x\alpha=x\}$. 

\begin{lemma}\label{isoa}
Let $Y$ and $Z$ be nonempty subsets of $\Omega_{n}$. 
Let $\Phi: \mathcal{POPI}_{n}(Y)\longrightarrow \mathcal{POPI}_{n}(Z)$ be an isomorphism. 
Then, $\Phi$ induces a bijection $\varphi: Y\longrightarrow Z$ such that: 
\begin{enumerate}[(i)]
\item $\mathrm{id}_{\{y\}}\Phi=\mathrm{id}_{\{y\varphi\}}$ for any $y\in Y$;
\item for any $\alpha\in \mathcal{POPI}_{n}(Y)$, if $y\in Y\cap\dom(\alpha)$, then 
$y\varphi\in\dom(\alpha\Phi)$ and $(y\varphi)(\alpha\Phi)=(y\alpha)\varphi$; 
\item for any $\alpha\in \mathcal{POPI}_{n}(Y)$, $\fix(\alpha\Phi)=\fix(\alpha)\varphi$;
\item for any $\alpha\in\mathcal{POPI}_{n}(Y)$, $\im(\alpha\Phi)=\im(\alpha)\varphi$. 
\end{enumerate}
\end{lemma}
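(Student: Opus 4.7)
The plan is to extract the bijection $\varphi\colon Y\to Z$ from the induced isomorphism between the rank-$1$ idempotents, and then to deduce each of (i)--(iv) from the fact that $\Phi$ preserves concrete semigroup-theoretic conditions on transformations.

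The starting observation is that the idempotents of $\mathcal{POPI}_{n}(Y)$ are exactly the partial identities $\mathrm{id}_{A}$ with $A\subseteq Y$: any idempotent injective partial transformation must be a partial identity, and $\mathrm{id}_{A}$ lies in $\mathcal{POPI}_{n}(Y)$ iff $A=\im(\mathrm{id}_{A})\subseteq Y$. Under the natural order $e\leq f\iff ef=fe=e$, this set is poset-isomorphic to $(2^{Y},\subseteq)$, whose minimum is the empty transformation (the zero of the semigroup, which is preserved by $\Phi$) and whose atoms are precisely the rank-$1$ idempotents $\mathrm{id}_{\{y\}}$, $y\in Y$. Since $\Phi$ preserves this order-theoretic description, it restricts to a bijection between $\{\mathrm{id}_{\{y\}}\mid y\in Y\}$ and $\{\mathrm{id}_{\{z\}}\mid z\in Z\}$, and defining $\varphi\colon Y\to Z$ by $\mathrm{id}_{\{y\}}\Phi=\mathrm{id}_{\{y\varphi\}}$ yields a bijection (its inverse induced by $\Phi^{-1}$), establishing (i).

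For (ii), fix $\alpha\in\mathcal{POPI}_{n}(Y)$ and $y\in Y\cap\dom(\alpha)$. The element $\mathrm{id}_{\{y\}}\alpha=\transf{y\\y\alpha}$ is a rank-$1$ element of $\mathcal{POPI}_{n}(Y)$ whose domain $\{y\}$ is contained in $Y$, hence regular by Proposition~\ref{regular}. Now $\Phi$ preserves regularity (a purely semigroup-theoretic property) and, for regular elements, preserves rank via the $\mathscr{D}$-class description in Theorem~\ref{green D}; consequently, $\mathrm{id}_{\{y\varphi\}}(\alpha\Phi)=(\mathrm{id}_{\{y\}}\alpha)\Phi$ has rank exactly $1$. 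In particular, its domain $\{y\varphi\}\cap\dom(\alpha\Phi)$ is nonempty, which forces $y\varphi\in\dom(\alpha\Phi)$. Exploiting the injectivity of $\alpha$, I can also write $\mathrm{id}_{\{y\}}\alpha=\alpha\,\mathrm{id}_{\{y\alpha\}}$ (note $y\alpha\in\im(\alpha)\subseteq Y$, so $\mathrm{id}_{\{y\alpha\}}$ is in the semigroup). Applying $\Phi$ gives $\mathrm{id}_{\{y\varphi\}}(\alpha\Phi)=(\alpha\Phi)\,\mathrm{id}_{\{(y\alpha)\varphi\}}$, and comparing the (singleton) images of these two rank-$1$ maps yields $(y\varphi)(\alpha\Phi)=(y\alpha)\varphi$.

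Parts (iii) and (iv) then fall out quickly. For (iii), since any fixed point $y$ of $\alpha$ satisfies $y=y\alpha\in\im(\alpha)\subseteq Y$, we have $\fix(\alpha)\subseteq Y\cap\dom(\alpha)$, so (ii) immediately gives $\fix(\alpha)\varphi\subseteq\fix(\alpha\Phi)$; the reverse inclusion follows by running the same argument for the isomorphism $\Phi^{-1}$ (whose induced bijection is necessarily $\varphi^{-1}$). For (iv), the key observation is that for $z\in Y$ one has $z\in\im(\alpha)\iff\alpha\,\mathrm{id}_{\{z\}}\neq\emptyset$, and since $\im(\alpha)\subseteq Y$ this criterion settles image-membership entirely; as $\Phi$ fixes the zero, applying $\Phi$ yields $z\in\im(\alpha)\iff(\alpha\Phi)\,\mathrm{id}_{\{z\varphi\}}\neq\emptyset\iff z\varphi\in\im(\alpha\Phi)$, whence $\im(\alpha\Phi)=\im(\alpha)\varphi$. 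The main obstacle I anticipate is the central rank-$1$ argument in (ii)---ensuring that $\mathrm{id}_{\{y\varphi\}}(\alpha\Phi)$ is nonzero and identifying its unique image point as $(y\alpha)\varphi$; once that is in hand, (iii) and (iv) reduce to tracking what $\Phi$ does to the zero element.
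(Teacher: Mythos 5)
Your proof is correct. Parts (ii) and (iii) follow essentially the same route as the paper: a factorization of a rank-one element through $\mathrm{id}_{\{y\}}$ and $\mathrm{id}_{\{y\alpha\}}$ (the paper uses $\mathrm{id}_{\{y\alpha\}}=\transf{y\alpha\\ y}\,\mathrm{id}_{\{y\}}\,\alpha$, you use $\mathrm{id}_{\{y\}}\alpha=\alpha\,\mathrm{id}_{\{y\alpha\}}$, which is the same idea), followed by an appeal to $\Phi^{-1}$ for the reverse inclusion on fixed points. Parts (i) and (iv), however, take genuinely different routes. For (i), the paper shows the images $\mathrm{id}_{\{y\}}\Phi=\mathrm{id}_{Z_y}$ have pairwise disjoint supports (since $\mathrm{id}_{\{y_1\}}\mathrm{id}_{\{y_2\}}=\varnothing$) and then runs a counting argument with $\Phi^{-1}$ to force each $Z_y$ to be a singleton; your observation that $E(\mathcal{POPI}_{n}(Y))\cong(2^{Y},\subseteq)$ under the natural order, so that the order-isomorphism induced by $\Phi$ must carry atoms to atoms, reaches the same conclusion without any counting and is arguably cleaner. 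For (iv), the paper first handles idempotents via $\im=\fix$ and part (iii), and then sandwiches $\im(\alpha\Phi)$ using $\alpha^{-1}\alpha$ and an idempotent $\beta$ with $\beta\Phi=(\alpha\Phi)^{-1}(\alpha\Phi)$; your membership test $z\in\im(\alpha)\iff\alpha\,\mathrm{id}_{\{z\}}\neq\varnothing$ (valid for $z\in Y$, which suffices because $\im(\alpha)\subseteq Y$ and $\im(\alpha\Phi)\subseteq Z$) is shorter and avoids inverses altogether. One small remark on (ii): invoking regularity and the $\mathscr{D}$-class description of Theorem~\ref{green D} to conclude that $(\mathrm{id}_{\{y\}}\alpha)\Phi$ has rank exactly one is more machinery than you need --- since $\Phi$ is a bijection fixing the zero, $\mathrm{id}_{\{y\varphi\}}(\alpha\Phi)\neq\varnothing$ follows at once, and that alone yields $y\varphi\in\dom(\alpha\Phi)$.
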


\begin{proof}
(i) The empty transformation $\varnothing$ is the zero of the semigroups $\mathcal{POPI}_{n}(Y)$ and $\mathcal{POPI}_{n}(Z)$. 
Since $\Phi$ is an isomorphism, we have $\varnothing\Phi=\varnothing$. 
For any $y\in Y$, since $\mathrm{id}_{\{y\}}$ is an idempotent, it follows that $\mathrm{id}_{\{y\}}\Phi$ is an idempotent, 
and so $\mathrm{id}_{\{y\}}\Phi=\mathrm{id}_{Z_{y}}$ for some nonempty subset $Z_{y}$ of $Z$. 
Clearly, $\mathrm{id}_{\{y_{1}\}}\mathrm{id}_{\{y_{2}\}}=\varnothing$ for any distinct $y_{1},\,y_{2}\in Y$, and so, 
it follows from $\mathrm{id}_{Z_{y_{1}}\cap Z_{y_{2}}}=\mathrm{id}_{Z_{y_{1}}}\mathrm{id}_{Z_{y_{2}}}=(\mathrm{id}_{\{y_{1}\}}\Phi)(\mathrm{id}_{\{y_{2}\}}\Phi)=
(\mathrm{id}_{\{y_{1}\}}\mathrm{id}_{\{y_{2}\}})\Phi=\varnothing\Phi=\varnothing$ that $Z_{y_{1}}\cap Z_{y_{2}}=\varnothing$. 
Therefore, $|Y|\leqslant \sum_{y\in Y}|Z_y| \leqslant |Z|$. 
Since $\Phi^{-1}$ is an isomorphism from $\mathcal{POPI}_{n}(Z)$ to $\mathcal{POPI}_{n}(Y)$, a similar argument can show that $|Z|\leqslant |Y|$, 
whence $|Y|=\sum_{y\in Y}|Z_y| =|Z|$, and so $Z_{y}$ is a singleton for each $y\in Y$, say $Z_{y}=\{z_y\}$. 
Moreover, the mapping $\varphi$: $Y\longrightarrow Z$ defined by $y\varphi =z_y$, for each $y \in Y$, is clearly a bijection. 

\smallskip 

(ii) Let $\alpha\in \mathcal{POPI}_{n}(Y)$ and take $y\in Y\cap\dom(\alpha)$. Then, 
$$
\mathrm{id}_{\{(y\alpha)\varphi\}}=\mathrm{id}_{\{y\alpha\}}\Phi=
(\transf{y\alpha \\ y }\transf{y \\ y\alpha})\Phi=
(\transf{y\alpha \\ y}\mathrm{id}_{\{y\}}\alpha)\Phi\\
=(\transf{y\alpha \\ y }\Phi)\mathrm{id}_{\{y\varphi\}}(\alpha\Phi)=
(\transf{y\alpha \\ y }\Phi)\transf{y\varphi \\ (y\varphi)(\alpha\Phi)},
$$
whence  $\{(y\alpha)\varphi\}={\im}(\mathrm{id}_{\{(y\alpha)\varphi\}})\subseteq\{(y\varphi)(\alpha\Phi)\}$. Consequently,
 $(y\alpha)\varphi=(y\varphi)(\alpha\Phi)$.

\smallskip 

(iii) Let $\alpha\in\mathcal{POPI}_{n}(Y)$. If $y\in \fix (\alpha)$, then $y\in Y\cap{\dom}(\alpha)$, and so by (ii),  $(y\varphi)(\alpha\Phi)=(y\alpha)\varphi=y\varphi$. Hence, $y\varphi\in \fix(\alpha\Phi)$ and so $\fix(\alpha)\varphi\subseteq \fix(\alpha\Phi)$. 
Since $\varphi$ is injective, it follows that $|\fix(\alpha)|=|\fix(\alpha)\varphi|\leqslant |\fix(\alpha\Phi)|$. 
Similarly, as $\Phi^{-1}$ is an isomorphism from $\mathcal{POPI}_{n}(Z)$ to $\mathcal{POPI}_{n}(Y)$, 
we have $|\fix(\alpha\Phi)|\leqslant|\fix((\alpha\Phi)\Phi^{-1}|=|\fix(\alpha)|$, whence $|\fix(\alpha)\varphi|=|\fix(\alpha\Phi)|$,  
and so $\fix(\alpha)\varphi=\fix(\alpha\Phi)$.  

\smallskip 

(iv) First, notice that if $\alpha\in E(\mathcal{POPI}_{n}(Y))$, then $\alpha\Phi\in E(\mathcal{POPI}_{n}(Z))$, whence ${\im}(\alpha)=\fix(\alpha)$ and ${\im}(\alpha\Phi)=\fix(\alpha\Phi)$. Thus, from (iii), it follows that ${\im}(\alpha\Phi)={\im}(\alpha)\varphi$. 

Now, let $\alpha$ be any element of $\mathcal{POPI}_{n}(Y)$. Then, clearly, $\alpha^{-1}\alpha\in E(\mathcal{POPI}_{n}(Y))$ and 
$\alpha\Phi=(\alpha(\alpha^{-1}\alpha))\Phi=\alpha\Phi (\alpha^{-1}\alpha)\Phi$, whence $\im(\alpha\Phi) \subseteq \im ((\alpha^{-1}\alpha)\Phi)$. 
On the other hand, we also have $(\alpha\Phi)^{-1}(\alpha\Phi)\in E(\mathcal{POPI}_{n}(Z))$. Therefore, there exists $\beta\in E(\mathcal{POPI}_{n}(Y))$ 
such that $\beta\Phi=(\alpha\Phi)^{-1}(\alpha\Phi)$. 
Since $(\alpha\beta)\Phi=(\alpha\Phi)(\beta\Phi) = \alpha\Phi((\alpha\Phi)^{-1}(\alpha\Phi)) = \alpha\Phi$, it follows that $\alpha\beta=\alpha$, 
whence $\im(\alpha)\subseteq\im(\beta)$ and so $(\im(\alpha))\varphi\subseteq(\im(\beta))\varphi$. 
Thus,
$$
\im(\alpha\Phi)\subseteq 
\im((\alpha^{-1}\alpha)\Phi)= (\im (\alpha^{-1}\alpha))\varphi = (\im(\alpha))\varphi\subseteq(\im(\beta))\varphi = \im(\beta\Phi)= \im ((\alpha\Phi)^{-1}(\alpha\Phi))\subseteq\im(\alpha\Phi),
$$
from which follows that $\im(\alpha\Phi)= \im(\alpha)\varphi$. 
\end{proof}

\smallskip 

Let us consider the following two permutations of $\Omega_{n}$ of order $n$ and $2$, respectively: 
\begin{equation}\label{gh}
g=\begin{pmatrix} 
1&2&\cdots&n-1&n\\
2&3&\cdots&n&1
\end{pmatrix} 
\quad\text{and}\quad  
h=\begin{pmatrix} 
1&2&\cdots&n-1&n\\
n&n-1&\cdots&2&1
\end{pmatrix}. 
\end{equation} 
Then, for $n\geqslant3$, $g$ and $h$ generate the well-known dihedral group $\mathcal{D}_{2n}$ of order $2n$ 
(considered as a subgroup of the symmetric group $\mathcal{S}_n$). In fact, we have 
$$
\mathcal{D}_{2n}=\langle g,h\mid g^n=h^2=1, gh=hg^{n-1}\rangle=\{1,g,g^2,\ldots,g^{n-1}, h,hg,hg^2,\ldots,hg^{n-1}\}. 
$$

Observe that, for $n\in\{1,2\}$, the dihedral group $\mathcal{D}_{2n}=\langle g,h\mid g^n=h^2=1, gh=hg^{n-1}\rangle$ of order $2n$ 
(also known as the \textit{Klein four-group} for $n=2$) cannot be considered as a subgroup of $\mathcal{S}_n$. 

Next, recall the following result from \cite{Fernandes&al:2016}:

\begin{lemma}\cite[Propositions 1.1]{Fernandes&al:2016}\label{D2n}
A partial injective transformation $\alpha$ is a restriction of a permutation in $\mathcal{D}_{2n}$ if and only if $|j\alpha-i\alpha|\in\{j-i,n-(j-i)\}$, for all $i,j\in\dom(\alpha)$ such that $i<j$.
\end{lemma}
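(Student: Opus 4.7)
The plan is to reformulate the condition modularly and then handle the two directions separately. Since $i\alpha, j\alpha \in \Omega_n$ forces $|j\alpha - i\alpha| \leqslant n - 1$, the condition $|j\alpha - i\alpha| \in \{j-i, n-(j-i)\}$ is equivalent to the single congruence $j\alpha - i\alpha \equiv \pm (j - i) \pmod{n}$, and this reformulation is what makes the dihedral structure visible.

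For the forward implication, I would compute directly on the two families of elements of $\mathcal{D}_{2n}$. Each rotation $g^k$ satisfies $ig^k \equiv i + k \pmod{n}$, so $jg^k - ig^k \equiv j - i \pmod{n}$; each reflection $hg^k$ satisfies $i(hg^k) \equiv -i + c_k \pmod{n}$ for some constant $c_k$, and hence $j(hg^k) - i(hg^k) \equiv -(j - i) \pmod{n}$. Thus any restriction of an element of $\mathcal{D}_{2n}$ satisfies the required congruence, and therefore the absolute-value condition.

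For the converse, I would construct $\pi \in \mathcal{D}_{2n}$ extending $\alpha$, splitting on $|\dom(\alpha)|$. The cases $|\dom(\alpha)| \leqslant 1$ are trivial. Assuming $|\dom(\alpha)| \geqslant 2$, fix distinct $i_1, i_2 \in \dom(\alpha)$; the hypothesis provides $\epsilon \in \{-1, +1\}$ with $i_2\alpha - i_1\alpha \equiv \epsilon(i_2 - i_1) \pmod{n}$. Define $\pi$ as the unique rotation $g^k$ (if $\epsilon = +1$) or reflection $hg^k$ (if $\epsilon = -1$) with $i_1 \pi = i_1\alpha$; by construction $\pi$ automatically agrees with $\alpha$ at $i_2$ as well. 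For any other $j \in \dom(\alpha)$, I would then use the hypothesis applied to the pairs $(i_1, j)$ and $(i_2, j)$ to conclude $j\pi = j\alpha$.

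The main obstacle is this final verification: a priori the sign attached to $(i_1, j)$ could be $-\epsilon$ rather than $\epsilon$. Combining the congruences $j\alpha - i_1\alpha \equiv -\epsilon(j - i_1) \pmod{n}$ and $i_2\alpha - i_1\alpha \equiv \epsilon(i_2 - i_1) \pmod{n}$ with the congruence hypothesis on $(i_2, j)$ forces either $2(j - i_1) \equiv 0 \pmod{n}$ or $2(i_2 - i_1) \equiv 0 \pmod{n}$, each of which is only possible when $n$ is even and the relevant distance equals $n/2$. In these degenerate cases a direct computation shows that the rotation and reflection candidates assign the same image to the problematic point, so $j\pi = j\alpha$ in all situations and $\pi$ is the desired extension of $\alpha$ inside $\mathcal{D}_{2n}$.
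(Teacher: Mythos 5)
The paper does not actually prove this lemma; it is imported verbatim from \cite{Fernandes&al:2016}, so there is no internal proof to compare against. Your modular reformulation and the forward direction are correct, and the overall strategy for the converse (pin down a candidate $\pi\in\mathcal{D}_{2n}$ using one base pair, then verify it on the rest of the domain) is the right one. However, the final step contains a genuine gap.

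The problem is the case $2(i_2-i_1)\equiv 0\pmod n$ with $2(j-i_1)\not\equiv 0\pmod n$. Here the base pair $(i_1,i_2)$ does \emph{not} determine $\epsilon$: both signs satisfy the congruence for that pair, so your choice of $\epsilon$ is arbitrary, and an unlucky choice yields a $\pi$ that fails at $j$. Your patch --- ``the rotation and reflection candidates assign the same image to the problematic point'' --- is true at $i_2$ (where it is not needed) but false at $j$: the rotation sends $j$ to $i_1\alpha+(j-i_1)$ and the reflection to $i_1\alpha-(j-i_1)$ modulo $n$, and these differ precisely because $2(j-i_1)\not\equiv 0$. Concretely, for $n=4$ let $\alpha$ be the restriction of $h$ to $\{1,2,3\}$ (so $1\mapsto 4$, $2\mapsto 3$, $3\mapsto 2$), which satisfies the hypothesis; taking $i_1=1$, $i_2=3$ gives $i_2\alpha-i_1\alpha\equiv 2\equiv\pm 2\pmod 4$, and choosing $\epsilon=+1$ produces $\pi=g^3$, which sends $2$ to $1\neq 3=2\alpha$. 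The gap is easily repaired: if $|\dom(\alpha)|\geqslant 3$, among any three domain points at most one pair can be at distance $n/2$, so one may always select a base pair with $i_2-i_1\neq n/2$; then $\epsilon$ is uniquely determined and your congruence argument shows that for every other $j$ either the sign for $(i_1,j)$ equals $\epsilon$ or $2(j-i_1)\equiv 0$, and in the latter case both candidates agree at $j$. If $|\dom(\alpha)|=2$ with the two points at distance exactly $n/2$, both candidates extend $\alpha$ and either choice works. With this adjustment the proof is complete.
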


Now, we can prove the following isomorphism theorem. 

\begin{theorem}\label{isob}
Let $Y$ and $Z$ be nonempty subsets of $\Omega_{n}$. Then, the semigroups $\mathcal{POPI}_{n}(Y)$ and 
$\mathcal{POPI}_{n}(Z)$ are isomorphic if and only if 
$|Y|=|Z|\leqslant2$ or $Y\delta=Z$ for some $\delta\in\mathcal{D}_{2n}$.
\end{theorem}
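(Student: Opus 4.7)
My plan is to treat the two directions separately, with necessity being the substantial one.

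For sufficiency there are two cases. If $Y\delta=Z$ for some $\delta\in\mathcal{D}_{2n}$, I would define $\Phi:\mathcal{POPI}_n(Y)\to\mathcal{POPI}_n(Z)$ by $\alpha\Phi=\delta^{-1}\alpha\delta$. Since every element of $\mathcal{D}_{2n}$ acts as a rotation or a reflection of the cyclic structure on $\Omega_n$, conjugation by $\delta$ preserves the property of being an orientation-preserving partial permutation, and $\im(\delta^{-1}\alpha\delta)=\im(\alpha)\delta\subseteq Y\delta=Z$; the inverse $\beta\mapsto\delta\beta\delta^{-1}$ shows $\Phi$ is an isomorphism. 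The case $|Y|=|Z|\leqslant 2$ is handled by direct inspection: for $|Y|=1$, $\mathcal{POPI}_n(Y)$ consists of the empty map and the $n$ rank-one elements $\transf{x\\y}$, its multiplication pattern depending only on $n$; for $|Y|=2$ a similar enumeration (using that all rank-one and rank-two elements are automatically orientation-preserving) yields the isomorphism via any bijection of $\Omega_n$ sending $Y$ to $Z$.

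For necessity, suppose $\Phi:\mathcal{POPI}_n(Y)\to\mathcal{POPI}_n(Z)$ is an isomorphism with $|Y|=|Z|=r\geqslant 3$. Lemma~\ref{isoa} provides a bijection $\varphi:Y\to Z$, which I would extend to a bijection $\hat\varphi:\Omega_n\to\Omega_n$ with $\hat\varphi|_Y=\varphi$ by exploiting rank-one $\mathscr{R}$-classes: for each $x\in\Omega_n\setminus Y$ the set $\{\transf{x\\y}:y\in Y\}$ is, by Theorem~\ref{green}(ii) and Proposition~\ref{regular}, a non-regular $\mathscr{R}$-class of size $r$; since $\Phi$ preserves $\mathscr{R}$-classes and regularity, its image is a non-regular $\mathscr{R}$-class of $\mathcal{POPI}_n(Z)$ of the form $\{\transf{x'\\z}:z\in Z\}$ for a unique $x'\in\Omega_n\setminus Z$, and I set $x\hat\varphi=x'$. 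Combined with Lemma~\ref{isoa}(iv), this yields $\transf{x\\y}\Phi=\transf{x\hat\varphi\\y\varphi}$ for all $x\in\Omega_n$ and $y\in Y$. Then, for any $\alpha\in\mathcal{POPI}_n(Y)$ and any $x\in\dom(\alpha)$, applying $\Phi$ to the identity $\alpha\,\mathrm{id}_{\{x\alpha\}}=\transf{x\\x\alpha}$ and using Lemma~\ref{isoa}(i) gives $(\alpha\Phi)\,\mathrm{id}_{\{(x\alpha)\varphi\}}=\transf{x\hat\varphi\\(x\alpha)\varphi}$, whence $(x\hat\varphi)(\alpha\Phi)=(x\alpha)\varphi$ and $\dom(\alpha\Phi)=\dom(\alpha)\hat\varphi$.

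The main obstacle is to show $\hat\varphi\in\mathcal{D}_{2n}$, equivalently (by Lemma~\ref{D2n}) that $\hat\varphi$ satisfies the cyclic distance condition on every pair of $\Omega_n$. I would extract this from rank-three orientation-preserving transformations: for any $x_1<x_2<x_3$ in $\Omega_n$ and any $y_1<y_2<y_3$ in $Y$ (the latter existing since $r\geqslant 3$), the element $\alpha\in\mathcal{POPI}_n(Y)$ with $\dom(\alpha)=\{x_1,x_2,x_3\}$ and $x_i\alpha=y_i$ has increasing, hence cyclic, image sequence, and by the previous step $\alpha\Phi$ sends $x_i\hat\varphi\mapsto y_i\varphi$. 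Requiring $\alpha\Phi\in\mathcal{POPI}_n(Z)$ forces the permutations $\sigma,\tau\in S_3$ that sort $(x_1\hat\varphi,x_2\hat\varphi,x_3\hat\varphi)$ and $(y_1\varphi,y_2\varphi,y_3\varphi)$, respectively, to lie in the same coset of the cyclic subgroup $C_3\leqslant S_3$. Since $\sigma$ depends only on $\{x_1,x_2,x_3\}$ and $\tau$ only on $\{y_1,y_2,y_3\}$, varying the two triples independently forces both cosets to be constant; hence $\hat\varphi$ either preserves cyclic orientation on every triple of $\Omega_n$ or reverses it on every triple. A standard argument (a sequence of distinct elements is cyclic iff every length-three subsequence is) then shows that $(1\hat\varphi,\ldots,n\hat\varphi)$ is a cyclic shift of $(1,\ldots,n)$ or of its reverse, so $\hat\varphi$ equals $g^t$ or $hg^t$ for some $t$; thus $\hat\varphi\in\mathcal{D}_{2n}$ and $Y\hat\varphi=Z$, completing the proof.
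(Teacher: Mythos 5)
Your proof is correct, and your sufficiency argument coincides with the paper's: conjugation by $\delta\in\mathcal{D}_{2n}$ in the first case, and conjugation by an arbitrary permutation carrying $Y$ onto $Z$ when $|Y|=|Z|\leqslant2$, using that every partial injection of rank at most $2$ is orientation-preserving. The necessity direction, however, takes a genuinely different route. The paper never leaves $Y$: for $i<j$ in $Y$ and $k\in Y\setminus\{i,j\}$ it counts the set $A(i,j,k)$ of elements $\transf{i&j&y\\i&j&k}$ of $\mathcal{POPI}_{n}(Y)$, whose cardinality is $n-(j-i)-1$ or $j-i-1$ according to the position of $k$; since $\Phi$ bijects $A(i,j,k)$ onto the corresponding set for $i\varphi,j\varphi,k\varphi$, this forces $|j\varphi-i\varphi|\in\{j-i,n-(j-i)\}$, and Lemma~\ref{D2n} then extends $\varphi$ to some $\delta\in\mathcal{D}_{2n}$ directly. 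You instead build the extension by hand: the non-regular rank-one $\mathscr{R}$-classes (via Theorem~\ref{green}(ii), Proposition~\ref{regular} and Lemma~\ref{isoa}(iv)) give a bijection $\hat\varphi$ of $\Omega_n$ extending $\varphi$, the identity $\alpha\,\mathrm{id}_{\{x\alpha\}}=\transf{x\\x\alpha}$ shows that $\Phi$ acts as conjugation by $\hat\varphi$ on domains, and a coset argument in $S_{3}$ applied to rank-three elements shows that $\hat\varphi$ uniformly preserves or uniformly reverses the cyclic order of triples, hence lies in $\mathcal{D}_{2n}$. Both arguments are sound; the paper's is shorter because the combinatorial work is outsourced to the metric criterion of Lemma~\ref{D2n}, while yours is more structural and yields the stronger by-product that, for $|Y|\geqslant3$, every isomorphism $\mathcal{POPI}_{n}(Y)\to\mathcal{POPI}_{n}(Z)$ is conjugation by a dihedral permutation. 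Do note that the two facts you invoke as standard at the end --- that a sequence of distinct elements is cyclic iff all its length-three subsequences are, and that a permutation of $\Omega_n$ preserving (resp.\ reversing) the cyclic order of every triple is a power of $g$ (resp.\ of the form $hg^{t}$) --- require a short verification and do not follow from Lemma~\ref{D2n} as stated, since your argument controls orientation rather than the distance condition appearing there.
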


\begin{proof}
If $Y\delta=Z$ for some $\delta\in\mathcal{D}_{2n}$, then it is routine to verify that the mapping $\Phi$: $\mathcal{POPI}_{n}(Y)\longrightarrow\mathcal{POPI}_{n}(Z)$ defined by $\alpha\Phi=\delta^{-1}\alpha\delta$, for all $\alpha\in\mathcal{POPI}_{n}(Y)$, is an isomorphism. 
On the other hand, suppose that $|Y|=|Z|\leqslant2$ and let $\sigma\in\mathcal{S}_{n}$ be such that $Y\sigma=Z$. 
Since any transformation of $\mathcal{I}_{n}$ with rank less than or equal to $2$ is orientation-preserving, we have $\sigma^{-1}\mathcal{POPI}_{n}(Y)\sigma\subseteq\mathcal{POPI}_{n}(Z)$ and so, as above, it is easy to check that the mapping $\Phi$: $\mathcal{POPI}_{n}(Y)\longrightarrow\mathcal{POPI}_{n}(Z)$ defined by $\alpha\Phi=\sigma^{-1}\alpha\sigma$ for all $\alpha\in\mathcal{POPI}_{n}(Y)$ is an isomorphism. 

Conversely, suppose that $\Phi:\mathcal{POPI}_{n}(Y)\longrightarrow\mathcal{POPI}_{n}(Z)$ is an isomorphism and 
let $\varphi:Y\longrightarrow Z$ be the bijection induced by $\Phi$ given by Lemma~\ref{isoa}. 
Then, $|Y|=|Z|$. Hence, suppose that $|Y|=|Z|\geqslant3$. 

Let $i,j\in Y$ be such that $i<j$ and take $k\in Y\setminus\{i,j\}$. 
Define 
$$
A(i,j,k) =\left\{  \begin{pmatrix} 
i&j&y\\
i&j&k
\end{pmatrix}\in \mathcal{POPI}_{n}(Y)\mid y\in\Omega_{n}
\right\}. 
$$
Then, it is easy to verify that 
$
A(i,j,k) =\left\{  \left(\begin{smallmatrix} 
i&j&y\\
i&j&k
\end{smallmatrix}\right)\mid y\in \{1,\ldots,i-1,j+1,\ldots,n\} 
\right\}, 
$
if $i<j<k$ or $k<i<j$, and 
$
A(i,j,k) =\left\{  \left(\begin{smallmatrix} 
i&j&y\\
i&j&k
\end{smallmatrix}\right)\mid y\in \{i+1,\ldots,j-1\} 
\right\}, 
$
if $i<k<j$. 
Thus, $|A(i,j,k)|\in\{n-(j-i)-1,j-i-1\}$.  
On the other hand, by Lemma \ref{isoa} (ii) and (iv), we have 
$$
A(i,j,k)\Phi =\left\{  \begin{pmatrix} 
i\varphi&j\varphi&z\\
i\varphi&j\varphi&k\varphi
\end{pmatrix}\in \mathcal{POPI}_{n}(Z)\mid z\in \Omega_{n}
\right\}
$$
and so, similarly, $|A(i,j,k)\Phi|\in\{n-|j\varphi-i\varphi|-1,|j\varphi-i\varphi|-1\}$. 
Therefore, $|j\varphi-i\varphi|\in \{j-i,n-(j-i)\}$. 

Now, by Lemma \ref{D2n}, it follows that $\varphi$ is a restriction of some permutation $\delta$ in  $\mathcal{D}_{2n}$, 
for which we have $Y\delta=Z$, as required. 
\end{proof}

\section{Rank of $\mathcal{POPI}_{n}(Y)$}\label{secd}

In this section, we determine the rank of the semigroup $\mathcal{POPI}_{n}(Y)$. Notice that, $\mathcal{POPI}_{n}(\Omega_{n})=\mathcal{POPI}_{n}$ and it is well known that the semigroup $\mathcal{POPI}_{n}$ has rank $2$ (see~\cite{Fernandes:2000}). 
Therefore, in what follows, we suppose that $Y=\{y_{1}<y_{2}<\cdots<y_{r}\}$ is a proper subset of $\Omega_{n}$.

Let $J_{m}^{S}=\{\alpha\in S\mid\rank(\alpha)= m\}$, for any subsemigroup $S$ of $\mathcal{PT}_{n}$ and $0\leqslant m\leqslant n$. 

We begin by recalling the following lemma. 

\begin{lemma}\cite[Lemma 5.2]{Li&al:2023}\label{POI}
For $0\leqslant m\leqslant r-2$, $J_{m}^{\mathcal{POI}_{n}(Y)}\subseteq\langle J_{m+1}^{\mathcal{POI}_{n}(Y)}\rangle$.
\end{lemma}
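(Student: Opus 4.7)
The plan is to show that each $\alpha\in J_m^{\mathcal{POI}_n(Y)}$ with $0\le m\le r-2$ factors as $\alpha=\beta\gamma$ for some $\beta,\gamma\in J_{m+1}^{\mathcal{POI}_n(Y)}$. For $m=0$, the empty transformation $\varnothing$ equals $\transf{y_1\\ y_2}\transf{y_1\\ y_2}$ for any distinct $y_1,y_2\in Y$ (available since $r\ge 2$), each factor being a rank-$1$ element of $\mathcal{POI}_n(Y)$. So the real work lies in $m\ge 1$.

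For $m\ge 1$, write $\dom(\alpha)=\{a_1<\cdots<a_m\}$ and $\im(\alpha)=\{b_1<\cdots<b_m\}\subseteq Y$, so that $a_i\alpha=b_i$. I would route $\alpha$ through an intermediate $m$-element set $W=\{w_1<\cdots<w_m\}\subseteq Y$: the order-preserving bijections $\beta_0=\transf{a_1&\cdots&a_m\\ w_1&\cdots&w_m}$ and $\gamma_0=\transf{w_1&\cdots&w_m\\ b_1&\cdots&b_m}$ lie in $\mathcal{POI}_n(Y)$ and satisfy $\beta_0\gamma_0=\alpha$. The factors $\beta,\gamma$ of rank $m+1$ are then produced by enlarging $\beta_0$ by one extra pair $(c,w^{\ast})\in(\Omega_n\setminus\dom(\alpha))\times(Y\setminus W)$ and $\gamma_0$ by one extra pair $(u,y^{\ast})\in(\Omega_n\setminus W)\times(Y\setminus\im(\alpha))$, each chosen so the enlarged map remains order-preserving. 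So long as $w^{\ast}\ne u$, the composition equals $\alpha$: the point $c$ is killed by $\gamma$ (since $c\beta=w^{\ast}\notin\dom(\gamma)$) and the point $u$ is killed by $\beta$ (since $u\notin\im(\beta)$).

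My first attempt would be $W=\im(\alpha)$, for which $\gamma_0=\mathrm{id}_{\im(\alpha)}$ admits the trivial one-point extension to $\mathrm{id}_{\im(\alpha)\cup\{y^{\ast}\}}$ for any $y^{\ast}\in Y\setminus\im(\alpha)$ (with $u:=y^{\ast}$), and the bound $|Y\setminus\im(\alpha)|=r-m\ge 2$ leaves room to enforce $u\ne w^{\ast}$. The only nontrivial requirement is then to extend $\alpha$ itself by one order-preserving pair, i.e.\ to exhibit a position $i\in\{0,1,\ldots,m\}$ with both $(a_i,a_{i+1})\cap\Omega_n\ne\emptyset$ and $(b_i,b_{i+1})\cap Y\ne\emptyset$ (using $a_0=b_0=0$ and $a_{m+1}=b_{m+1}=n+1$). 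Whenever such an $i$ exists, the argument closes.

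The main obstacle is the tight case in which no position is jointly free. A canonical instance is $\dom(\alpha)=\{1,\ldots,m\}$ with $\im(\alpha)=\{y_{r-m+1},\ldots,y_r\}$, where the sole domain gap sits above $a_m=m$ but no $Y$-gap sits above $b_m=y_r$. Here I would abandon $W=\im(\alpha)$ and slide the image to an interior subset of $Y$---for instance $W=\{y_{r-m},y_{r-m+1},\ldots,y_{r-1}\}$. With this $W$, $\beta_0$ now extends at the top via $w^{\ast}=y_r$ (using any $c\in\{m+1,\ldots,n\}$, nonempty because $n\ge r\ge m+2$), while $\gamma_0$ extends at the bottom via $u=1$ and $y^{\ast}=y_1$; the compatibility $w^{\ast}\ne u$ is automatic. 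The remaining tight configurations are dispatched by symmetric sliding (the analogous bottom-heavy case and mixed variants), and throughout the bound $r\ge m+2$ supplies the two unused elements of $Y$ needed both to move $W$ into the interior of $Y$ and to keep $w^{\ast}$ distinct from $u$.
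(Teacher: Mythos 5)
The paper itself does not prove this lemma --- it is imported verbatim from \cite[Lemma 5.2]{Li&al:2023} --- so there is no in-paper argument to measure yours against; I can only assess your sketch on its own terms. The overall strategy is sound and is the natural one: factor $\alpha$ through an $m$-subset $W\subseteq Y$ and enlarge each factor by one order-preserving pair, with the single compatibility condition $w^{\ast}\neq u$. Your $m=0$ case is correct, the generic case with $W=\im(\alpha)$ is correct (including the use of $r-m\geqslant 2$ to separate $y^{\ast}$ from $w^{\ast}$), and your one worked tight configuration checks out.

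The sentence ``the remaining tight configurations are dispatched by symmetric sliding'' is, however, the crux, and sliding a \emph{contiguous} window is not enough. A window $W$ consisting of consecutive elements of $Y$ has no interior gap inside $Y$, so it cannot receive $w^{\ast}$ at an interior position; but a tight configuration can force exactly that. For instance, with $n=6$, $Y=\Omega_6$ and $\alpha=\transf{1&2&5&6\\1&2&3&4}$, the only nonempty domain gap of $\dom(\alpha)$ is the interior one (position $2$, containing $3,4$) while the only nonempty $Y$-gap of $\im(\alpha)$ is the top one (position $4$), so $W$ must be \emph{punctured} between its second and third elements (e.g.\ $W=\{1,2,4,5\}$, $w^{\ast}=3$, $u=6$), not translated. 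The good news is that a uniform argument closes all tight cases at once: pick $i$ with $(a_i,a_{i+1})\cap\Omega_n\neq\emptyset$ and $j\neq i$ with $(b_j,b_{j+1})\cap Y\neq\emptyset$; choose an $(m+1)$-subset $V=\{v_1<\cdots<v_{m+1}\}$ of $Y$ by taking $v_k=y_k$ up to the relevant position and then skipping one index of $Y$ just after position $j$ (possible precisely because $r\geqslant m+2$), so that the gap of $W:=V\setminus\{v_{i+1}\}$ at position $j$ contains an integer $u$; set $w^{\ast}=v_{i+1}$. Since $u$ and $w^{\ast}$ lie in distinct gaps of $W$, $u\neq w^{\ast}$ is automatic, and both extensions are order-preserving by construction. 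With that replacing your last sentence, the proof is complete.
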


Next, observe that the permutation $g$ of $\Omega_{n}$ defined in (\ref{gh}) is an orientation-preserving transformation. 

We will now prove a series of lemmas. 

\begin{lemma}\label{POPIa}
For $0\leqslant m\leqslant r-2$, $J_{m}^{\mathcal{POPI}_{n}(Y)}\subseteq\langle J_{m+1}^{\mathcal{POPI}_{n}(Y)}\rangle$.
\end{lemma}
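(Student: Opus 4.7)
The plan is to factor, for each $\alpha \in J_m^{\mathcal{POPI}_n(Y)}$ with $0 \leq m \leq r-2$, the element $\alpha$ as a finite product of elements of $J_{m+1}^{\mathcal{POPI}_n(Y)}$. For the base case $m = 0$, since $r \geq 2$ (as $m \leq r-2$), I take two distinct elements $y_1, y_2 \in Y$ and observe that $\varnothing = \mathrm{id}_{\{y_1\}} \cdot \mathrm{id}_{\{y_2\}}$, a product of two rank-$1$ idempotents in $\mathcal{POPI}_n(Y)$.

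For $1 \leq m \leq r-2$, I would first split $\alpha$ into an order-preserving factor and a cyclic shift: write $\alpha = \tilde\alpha \cdot \rho$, where $\tilde\alpha \in \mathcal{POI}_n(Y)$ is the order-preserving partial permutation with $\dom(\tilde\alpha) = \dom(\alpha)$ and $\im(\tilde\alpha) = \im(\alpha)$ (so $\tilde\alpha$ sends the $i$-th smallest element of $\dom(\alpha)$ to $z_i$), and $\rho$ is the cyclic shift of $Z := \im(\alpha) = \{z_1<\cdots<z_m\}$ that makes $\tilde\alpha\rho = \alpha$. Both factors lie in $\mathcal{POPI}_n(Y)$ with rank $m$. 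By Lemma~\ref{POI}, $\tilde\alpha \in \langle J_{m+1}^{\mathcal{POI}_n(Y)}\rangle \subseteq \langle J_{m+1}^{\mathcal{POPI}_n(Y)}\rangle$, so the remaining task is to factor the cyclic shift $\rho$ over $J_{m+1}^{\mathcal{POPI}_n(Y)}$.

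To factor $\rho$, I distinguish two subcases. If $\rho = \mathrm{id}_Z$ (i.e., $\alpha$ is itself order-preserving), I pick distinct $v, w \in Y \setminus Z$ (possible since $|Y \setminus Z| = r - m \geq 2$) and use $\mathrm{id}_Z = \mathrm{id}_{Z\cup\{v\}} \cdot \mathrm{id}_{Z\cup\{w\}}$. Otherwise, for a nontrivial cyclic shift $\rho$, I pick $v \in Y \setminus Z$, set $Z' := Z \cup \{v\}$, and take $\beta$ to be a suitable cyclic shift of $Z'$, which is an element of $J_{m+1}^{\mathcal{POPI}_n(Y)}$. Writing $v' := \beta(v) \in Z$, the restriction $\beta|_Z$ is an orientation-preserving bijection $Z \to Z' \setminus \{v'\}$, so $\gamma_0 := (\beta|_Z)^{-1}\rho : \beta(Z) \to Z$ is orientation-preserving as a composition of such. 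I then extend $\gamma_0$ to a rank-$(m+1)$ orientation-preserving $\gamma \in \mathcal{POPI}_n(Y)$ by adjoining a single pair $(p, q)$ with $p \in \Omega_n \setminus (\beta(Z) \cup \{v'\})$ and $q \in Y \setminus Z$. Excluding $v'$ from $\dom(\gamma)$ forces $v = \beta^{-1}(v') \notin \dom(\beta\gamma)$, and a direct check then yields $\beta\gamma = \rho$.

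The hard part will be guaranteeing the existence of an extension pair $(p,q)$ that makes $\gamma$ orientation-preserving: the cyclic position of $p$ within $\beta(Z) \cup \{p\}$ (in $\Omega_n$) must correspond, via the cyclic shift of $\gamma_0$, to the cyclic position of $q$ within $Z \cup \{q\}$ (in $Y$). My plan to overcome this is to exploit the flexibility from the at least two choices for $v$ in $Y \setminus Z$ and from the $m$ nontrivial cyclic shifts available for $\beta$ on $Z'$, and to run a short case analysis on the cyclic gap structure of $Z$ in $\Omega_n$ and of $Z$ in $Y$, so as to cover the situations---such as when $\dom(\alpha)$ is tightly packed in $\Omega_n$---in which the most natural direct extension of $\rho$ into rank $m+1$ fails.
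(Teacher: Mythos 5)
There is a genuine gap. Your reduction $\alpha=\tilde\alpha\rho$ (order-preserving part times a rotation of $Z=\im(\alpha)$ on the image side) is sound, the base case $m=0$ and the subcase $\rho=\mathrm{id}_Z$ are fine, and Lemma~\ref{POI} does dispose of $\tilde\alpha$. But the whole weight of the argument then falls on factoring a nontrivial rotation $\rho$ of $Z$ over $J_{m+1}^{\mathcal{POPI}_{n}(Y)}$, and for that step you only offer a plan: you yourself identify that the existence of a compatible extension pair $(p,q)$ is ``the hard part,'' and you defer it to an unexecuted case analysis over the choices of $v$ and of the shift $\beta$. That existence is not automatic --- the cyclic gaps of $\dom(\gamma_0)$ in $\Omega_n$ that contain a point available for $p$ must be matched, via $\gamma_0$, with cyclic gaps of $Z$ in $Y$ that contain a point of $Y\setminus Z$ available for $q$, and a priori these can fail to line up for a given $v$ and $\beta$. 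Verifying that some choice always works is exactly the kind of multi-case combinatorial argument the paper has to carry out at the top rank (compare the three cases and several subcases of Lemma~\ref{POPIc}); without it your proof is incomplete.

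The paper avoids this difficulty entirely by putting the rotation on the \emph{domain} side as a power of the full $n$-cycle $g$: by \cite[Proposition 3.1]{Fernandes:2000} one writes $\alpha=g^{\ell}\alpha_{1}$ with $\alpha_{1}\in\mathcal{POI}_{n}(Y)$, applies Lemma~\ref{POI} to get $\alpha_{1}=\beta\gamma$ with both factors of rank $m+1$, and then absorbs the rotation into the first factor: $g^{\ell}\beta$ has the same image and rank as $\beta$, hence lies in $J_{m+1}^{\mathcal{POPI}_{n}(Y)}$ even though $g^{\ell}$ itself need not lie in $\mathcal{POPI}_{n}(Y)$. Because $g^{\ell}$ is a permutation of all of $\Omega_n$, no extension or gap-matching problem arises. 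You could repair your argument either by completing the deferred case analysis for $\rho$, or, far more economically, by replacing your image-side rotation with this domain-side decomposition.
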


\begin{proof}
Let $\alpha\in\mathcal{POPI}_{n}(Y)$ be such that ${\rank}(\alpha)=m$. Then, by \cite[Proposition 3.1]{Fernandes:2000}, there exist $\alpha_{1}\in\mathcal{POI}_{n}$ and $\ell\in\{0,1,\cdots,n-1\}$ such that $\alpha=g^{\ell}\alpha_{1}$, where $g$ is the permutation defined in (\ref{gh}). Then, 
${\im}(\alpha_{1})={\im}(\alpha)\subseteq Y$, and so $\alpha_{1}\in\mathcal{POI}_{n}(Y)$. By Lemma~\ref{POI}, 
there exist $\beta,\gamma\in\mathcal{POI}_{n}(Y)$ with rank $m+1$ such that 
$\alpha_{1}=\beta\gamma$. 
Let $\delta=g^{\ell}\beta$. Then, ${\im}(\delta)={\im}(\beta)\subseteq Y$, 
and so $\delta\in\mathcal{POPI}_{n}(Y)$ with ${\rank}(\delta)=m+1$. Thus, $\alpha=\delta\gamma\in \langle J_{m+1}^{\mathcal{POPI}_{n}(Y)}\rangle$, as required.
\end{proof}

Let $V=\{\alpha\in\mathcal{POI}_{n}(Y)\mid \mbox{${\dom}(\alpha)\subseteq Y$ and ${\rank}(\alpha)=r-1$}\}$. 
Obviouly, we also have $V\subseteq\mathcal{POPI}_{n}(Y)$.

\begin{lemma}\label{POPIb}
$J_{r-1}^{\mathcal{POPI}_{n}(Y)}\subseteq\langle J_{r}^{\mathcal{POPI}_{n}(Y)}\cup V\rangle$.
\end{lemma}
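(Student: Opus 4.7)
The plan is to prove the stronger statement that every $\alpha \in J_{r-1}^{\mathcal{POPI}_n(Y)}$ factors as $\alpha = \beta\gamma$ with $\beta \in J_r^{\mathcal{POPI}_n(Y)}$ and $\gamma \in V$; the case $r=1$ is immediate since $\varnothing \in V$. Assume $r \geq 2$, write $\dom(\alpha) = \{a_1 < \cdots < a_{r-1}\}$ and $\im(\alpha) = Y \setminus \{y_j\}$. For each $k \in \{1, \ldots, r\}$, let $\gamma_k \in V$ be the unique order-preserving bijection from $Y \setminus \{y_k\}$ onto $Y \setminus \{y_j\}$. Since composition of orientation-preserving maps is orientation-preserving, $\alpha\gamma_k^{-1}$ is an orientation-preserving injection from $\dom(\alpha)$ onto $Y \setminus \{y_k\}$. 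My candidate for $\beta$ is the extension $(\alpha\gamma_k^{-1}) \cup \{(c, y_k)\}$ for some $c \in \Omega_n \setminus \dom(\alpha)$; if $\beta$ is orientation-preserving then $\beta \in J_r^{\mathcal{POPI}_n(Y)}$ (its image is $Y$), and a direct check gives $\beta\gamma_k = \alpha$ (the pair $(c, y_k)$ is annihilated because $y_k \notin \dom(\gamma_k)$).

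Requiring $\beta$ to be orientation-preserving pins $c$ to a specific arc of $\Omega_n$: letting $a_p, a_q \in \dom(\alpha)$ be the unique elements satisfying $a_p(\alpha\gamma_k^{-1}) = y_{k-1}$ and $a_q(\alpha\gamma_k^{-1}) = y_{k+1}$ (indices mod $r$), $\beta$ is orientation-preserving precisely when $c$ lies in the positive arc of $\Omega_n$ from $a_p$ to $a_q$, so that the new image $y_k$ slots into the gap between $y_{k-1}$ and $y_{k+1}$ in $Y$'s cyclic order.

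The combinatorial heart of the proof is then a covering claim: as $k$ ranges over $\{1, \ldots, r\}$, the $r$ pairs $(y_{k-1}\gamma_k, y_{k+1}\gamma_k)$ exhaust all $r-1$ cyclically-adjacent pairs of $Y \setminus \{y_j\}$, with $k=1$ and $k=r$ both hitting the wrap-around pair. Because $\alpha^{-1}$ is cyclic-order-preserving, these pull back to all $r-1$ cyclic gaps of $\dom(\alpha)$ in $\Omega_n$, whose disjoint union has $n - (r-1) \geq 2$ elements since $Y \subsetneq \Omega_n$. Hence at least one such gap is nonempty, supplying the required pair $(k, c)$ and completing the factorisation.

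The main obstacle is verifying the covering claim, which requires computing $y_{k-1}\gamma_k$ and $y_{k+1}\gamma_k$ in the three sub-cases $k < j$, $k = j$, $k > j$, with particular care at the boundary values $k \in \{1, r\}$ where $y_{k-1}$ or $y_{k+1}$ must be interpreted modulo $r$ in $Y$. The degenerate case $r = 2$ is handled directly, since every two-element image sequence is trivially cyclic: the orientation-preserving condition is then vacuous, and any $c \in \Omega_n \setminus \dom(\alpha)$ works (such $c$ exists because $n \geq r + 1 = 3$).
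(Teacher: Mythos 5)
Your factorisation is correct, and in fact its skeleton coincides with the paper's: in both arguments $\gamma$ is the order-isomorphism of $Y\setminus\{y_k\}$ onto $\im(\alpha)$ (an element of $V$) and $\beta$ is the extension of $\alpha\gamma^{-1}$ by one extra domain point sent to the missing image point $y_k$. Where you genuinely diverge is in how the orientation condition on $\beta$ is handled. The paper first writes $\alpha=g^{\ell}\alpha_1$ with $\alpha_1\in\mathcal{POI}_{n}(Y)$, invoking \cite[Proposition 3.1]{Fernandes:2000}, so that the insertion is performed on an \emph{order}-preserving map: there any free point $c$ lying between consecutive domain elements works, the index of the inserted image point is forced by the linear position of $c$, the factors $\beta,\gamma$ are written down explicitly, and $g^{\ell}$ is finally absorbed into $\beta$. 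You keep $\alpha$ cyclic and therefore must pay for the unconstrained choice of $c$ with the covering claim: that the $r$ admissible arcs, one per choice of $k$, exhaust the $r-1$ cyclic gaps of $\dom(\alpha)$, whose union $\Omega_n\setminus\dom(\alpha)$ is nonempty because $Y$ is proper. That claim is true -- the pairs $(y_{k-1}\gamma_k,\,y_{k+1}\gamma_k)$ come out as $(y_{k-1},y_k)$ for $k<j$, $(y_{j-1},y_{j+1})$ for $k=j$, and $(y_k,y_{k+1})$ for $k>j$, with $k=1$ and $k=r$ both yielding the wrap-around pair, exactly as you predict -- and pulling back along the cyclic-order-preserving bijection $\alpha$ does give all cyclic gaps of $\dom(\alpha)$; so your argument closes once those three sub-case computations (and the boundary interpretations at $k\in\{1,r\}$ and the degenerate case $r=2$) are actually written out. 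The trade-off between the two routes: the paper's linearisation makes the combinatorics trivial at the cost of citing the decomposition of $\mathcal{POPI}_{n}$ over $\mathcal{POI}_{n}$ and of long explicit two-line displays, while your version is self-contained on that point but carries the extra cyclic-adjacency bookkeeping, which at present is only sketched.
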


\begin{proof}
Let $\alpha\in J_{r-1}^{\mathcal{POPI}_{n}(Y)}$. By \cite[Proposition 3.1]{Fernandes:2000}, there exist $\alpha_{1}\in\mathcal{POI}_{n}$ and $\ell\in\{0,1,\cdots,n-1\}$ such that $\alpha=g^{\ell}\alpha_{1}$. Then, ${\im}(\alpha_{1})={\im}(\alpha)\subseteq Y$, as $g$ is a permutation of $\Omega_{n}$, whence $\alpha_{1}\in J_{r-1}^{\mathcal{POI}_{n}(Y)}$. Assume that 
\[
\alpha_{1}= \begin{pmatrix}a_{1} &\cdots &a_{i-1} &a_{i} &a_{i+1} &\cdots &a_{r-1}\\y_{1} &\cdots &y_{i-1} &y_{i+1} &y_{i+2} &\cdots &y_{r} \end{pmatrix}, 
\]
with ${\im}(\alpha)=Y\backslash\{y_{i}\}$ for some $1\leqslant i\leqslant r$. Since $\rank(\alpha)=r-1<n$, then 
there exist $c\in \Omega_n\backslash\{a_{1},\ldots,a_{r-1}\}$ and $1\leqslant j\leqslant r$ 
such that $a_{j-1}<c<a_{j}$, where $a_{0}=0$ and $a_{r}=n+1$. 
Let
$$
\beta = \begin{pmatrix}a_{1} &\cdots &a_{i-1} &a_{i} &\cdots &a_{j-1} &c &a_{j} &\cdots &a_{r-1} \\y_{1} &\cdots &y_{i-1} &y_{i} &\cdots &y_{j-1} &y_{j} &y_{j+1} &\cdots &y_{r} \end{pmatrix}
$$
and
$$
\gamma = \begin{pmatrix}y_{1} &\cdots &y_{i-1} &y_{i} &\cdots &y_{j-1} &y_{j+1} &\cdots &y_{r} \\y_{1} &\cdots &y_{i-1} &y_{i+1} &\cdots &y_{j} &y_{j+1} &\cdots &y_{r} \end{pmatrix},
$$
if $i\leqslant j$; and
$$
\beta =\begin{pmatrix}a_{1} &\cdots &a_{j-1} &c &a_{j} &\cdots &a_{i-1} &a_{i} &\cdots &a_{r-1} \\y_{1} &\cdots &y_{j-1} &y_{j} &y_{j+1} &\cdots &y_{i} &y_{i+1} &\cdots &y_{r} \end{pmatrix}
$$
and
$$
\gamma =\begin{pmatrix}y_{1} &\cdots &y_{j-1} &y_{j+1} &\cdots &y_{i} &y_{i+1} &\cdots &y_{r} \\y_{1} &\cdots &y_{j-1} &y_{j} &\cdots &y_{i-1} &y_{i+1} &\cdots &y_{r} \end{pmatrix}, 
$$ 
if $i>j$. Then, it is a routine matter to verify that $\beta\in J_{r}^{\mathcal{POPI}_{n}(Y)}$, $\gamma\in V$ and $\alpha_{1}=\beta\gamma$. 
Hence, $\alpha=g^{\ell}\beta\gamma$. Since $g^{\ell}\beta\in J_{r}^{\mathcal{POPI}_{n}(Y)}$, we have $\alpha\in\langle J_{r}^{\mathcal{POPI}_{n}(Y)}\cup V\rangle$, as required.
\end{proof}

\begin{lemma}\label{POPIc}
$V\subseteq\langle J_{r}^{\mathcal{POPI}_{n}(Y)}\rangle$.
\end{lemma}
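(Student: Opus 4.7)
The plan is to express each $\gamma\in V$ as a product of elements of $J_r^{\mathcal{POPI}_n(Y)}$. Writing $\gamma$ with $\dom(\gamma)=Y\setminus\{y_p\}$ and $\im(\gamma)=Y\setminus\{y_q\}$, my approach is first to show that $\mathrm{id}_{Y\setminus\{y_l\}}\in\langle J_r^{\mathcal{POPI}_n(Y)}\rangle$ for every $l$, and then to reduce the general case to this via the construction from the proof of Lemma~\ref{POPIb} applied to $\gamma$ itself.

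For the identity case, I use that $Y\subsetneq\Omega_n$ to pick some $c\in\Omega_n\setminus Y$, and then choose $l\in\{1,\ldots,r\}$ with $y_{l-1}<c<y_{l+1}$ and $c\ne y_l$ (using the conventions $y_0=0$, $y_{r+1}=n+1$); such an $l$ exists because $c$ lies in one of the gaps of $Y\cup\{0,n+1\}$. Define $\beta$ to have domain $(Y\setminus\{y_l\})\cup\{c\}$, to act as the identity on $Y\setminus\{y_l\}$, and to send $c\mapsto y_l$. Then
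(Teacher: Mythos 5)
Your proposal is incomplete -- it breaks off at ``Then'' before finishing even the partial-identity case -- and, more importantly, the outlined strategy has a genuine gap in its second step. For the first step, what you construct does work as far as it goes: with $c\in\Omega_n\setminus Y$ and $l$ the index adjacent to $c$, your $\beta$ lies in $J_r^{\mathcal{POPI}_n(Y)}$ and $\beta^2=\mathrm{id}_{Y\setminus\{y_l\}}$ (since $y_l\notin\dom(\beta)$). But this produces $\mathrm{id}_{Y\setminus\{y_l\}}$ only for the particular indices $l$ that sit next to a gap of $Y$; to get ``every $l$'' as you claim, you would still need an extra argument, e.g.\ conjugating by powers of $\bar{g}=\transf{y_1&\cdots&y_{r-1}&y_r\\ y_2&\cdots&y_r&y_1}\in J_r^{\mathcal{POPI}_n(Y)}$, which you do not mention. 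That part is repairable.

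The real problem is the reduction of a general $\gamma\in V$ to the identity case ``via the construction from the proof of Lemma~\ref{POPIb} applied to $\gamma$ itself.'' Write $\dom(\gamma)=Y\setminus\{y_p\}$ and $\im(\gamma)=Y\setminus\{y_q\}$. The Lemma~\ref{POPIb} factorization gives $\gamma=\beta\gamma'$ where $\gamma'\in V$ has image $Y\setminus\{y_q\}$ again and domain $Y\setminus\{y_j\}$, with $j$ forced by where the chosen point $c\notin\dom(\gamma)$ falls among the elements of $\dom(\gamma)$. So $\gamma'$ is another non-identity element of $V$ unless $j=q$, which requires a point of $\Omega_n\setminus\dom(\gamma)$ in a specific interval determined by $q$; the only point guaranteed to be available there in general is $y_p$ itself, and choosing $c=y_p$ returns $\gamma'=\gamma$, i.e.\ the reduction is circular. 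More structurally, any attempt to factor $\gamma$ through order-preserving maps and partial identities runs into the obstruction that inserting the missing value $y_q$ into the (increasing) image sequence of $\gamma$ at the position dictated by an available gap of $Y$ generally creates two cyclic descents, so the needed rank-$r$ factor fails to be orientation-preserving. This is precisely why the paper's proof does something different: it splits into the three cases $y_1>1$, $y_r<n$, and ``internal gap,'' and in each case writes $\alpha=\beta\gamma$ with explicitly constructed rank-$r$ factors that genuinely wrap around (sending $y_1$ to $y_{r-j+2}$, etc.), exploiting orientation-preservation rather than order-preservation. As it stands, your argument does not prove the lemma.
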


\begin{proof}
Let $\alpha\in V$ and suppose that ${\dom}(\alpha)=Y\backslash\{y_{i}\}$ and ${\im}(\alpha)=Y\backslash\{y_{j}\}$, 
for some $1\leqslant i, j\leqslant r$. Clearly, if $i\geqslant j$, then
\[
\alpha=\begin{pmatrix}y_{1} &\cdots &y_{j-1} &y_{j} &\cdots &y_{i-1} &y_{i+1} &\cdots &y_{r}\\y_{1} &\cdots &y_{j-1} &y_{j+1} &\cdots &y_{i} &y_{i+1} &\cdots &y_{r}\end{pmatrix};
\]
if $i<j$, then
\[
\alpha=\begin{pmatrix}y_{1} &\cdots &y_{i-1} &y_{i+1} &\cdots &y_{j} &y_{j+1} &\cdots &y_{r}\\y_{1} &\cdots &y_{i-1} &y_{i} &\cdots &y_{j-1} &y_{j+1} &\cdots &y_{r}\end{pmatrix}.
\]
Now, we distinguish three major cases.

\smallskip 

\noindent{\sc case~1.} $y_{1}>1$. If $i\geqslant j$, then take 
$$
\beta =\begin{pmatrix}y_{1} &\cdots &y_{j-1} &y_{j} &\cdots &y_{i-1} &y_{i} &\cdots &y_{r}\\y_{r-j+2} &\cdots &y_{r} &y_{1} &\cdots &y_{i-j} &y_{i-j+1} &\cdots &y_{r-j+1}\end{pmatrix}
$$
and
$$
\gamma =\begin{pmatrix}1 &y_{1} &\cdots &y_{i-j} &y_{i-j+2} &\cdots &y_{r-j+1} &y_{r-j+2} &\cdots &y_{r}\\y_{j} &y_{j+1} &\cdots &y_{i} &y_{i+1} &\cdots &y_{r} &y_{1} &\cdots &y_{j-1}\end{pmatrix}; 
$$
if $i<j$, then take 
$$
\beta =\begin{pmatrix}y_{1} &\cdots &y_{i-1} &y_{i} &y_{i+1} &\cdots &y_{j} &y_{j+1} &\cdots &y_{r}\\y_{r-j+1} &\cdots &y_{r-j+i-1} &y_{r-j+i} &y_{r-j+i+1} &\cdots &y_{r} &y_{1} &\cdots &y_{r-j}\end{pmatrix}
$$
and
$$
\gamma =\begin{pmatrix}1 &y_{1} &\cdots &y_{r-j} &y_{r-j+1} &\cdots &y_{r-j+i-1} &y_{r-j+i+1} &\cdots &y_{r}\\y_{j} &y_{j+1} &\cdots &y_{r} &y_{1} &\cdots &y_{i-1} &y_{i} &\cdots &y_{j-1}\end{pmatrix}.
$$

\smallskip 

\noindent{\sc case~2.} $y_{r}<n$. If $i\geqslant j$, then take 
$$
\beta=\begin{pmatrix}y_{1} &\cdots &y_{j-1} &y_{j} &\cdots &y_{i-1} &y_{i} &\cdots &y_{r}\\y_{r-j+2} &\cdots &y_{r} &y_{1} &\cdots &y_{i-j} &y_{i-j+1} &\cdots &y_{r-j+1}\end{pmatrix}
$$
and
$$
\gamma =\begin{pmatrix}y_{1} &\cdots &y_{i-j} &y_{i-j+2} &\cdots &y_{r-j+1} &y_{r-j+2} &\cdots &y_{r} &n\\y_{j+1} &\cdots &y_{i} &y_{i+1} &\cdots &y_{r} &y_{1} &\cdots &y_{j-1} &y_{j}\end{pmatrix}; 
$$
if $i<j$, then take 
$$
\beta =\begin{pmatrix}y_{1} &\cdots &y_{i-1} &y_{i} &y_{i+1} &\cdots &y_{j} &y_{j+1} &\cdots &y_{r}\\y_{r-j+1} &\cdots &y_{r-j+i-1} &y_{r-j+i} &y_{r-j+i+1} &\cdots &y_{r} &y_{1} &\cdots &y_{r-j}\end{pmatrix}
$$
and
$$
\gamma =\begin{pmatrix}y_{1} &\cdots &y_{r-j} &y_{r-j+1} &\cdots &y_{r-j+i-1} &y_{r-j+i+1} &\cdots &y_{r} &n\\y_{j+1} &\cdots &y_{r} &y_{1} &\cdots &y_{i-1} &y_{i} &\cdots &y_{j-1} &y_{j}\end{pmatrix}.
$$

\smallskip 

\noindent{\sc case~3.} $y_{1}=1$, $y_{r}=n$ and $y_{k}<y_{k+1}-1$, for some $1\leqslant k\leqslant r-1$. 

\smallskip 

Subcase 3.1. $i\geqslant j$. If $k\geqslant j-1$, then take 
$$
\addtocounter{MaxMatrixCols}{3} 
\beta =\begin{pmatrix}y_{1} &\cdots &y_{j-1} &y_{j} &\cdots &y_{i-1} &y_{i} &y_{i+1} &\cdots &y_{r+j-k-1} &y_{r+j-k} &\cdots &y_{r}\\y_{k+2-j} &\cdots &y_{k} &y_{k+1} &\cdots &y_{i+k-j} &y_{i+k+1-j} &y_{i+k+2-j} &\cdots &y_{r} &y_{1} &\cdots &y_{k+1-j}\end{pmatrix}
$$
and
$$
\gamma  =\begin{pmatrix}y_{1} &\cdots &y_{k+1-j} &y_{k+2-j} &\cdots &y_{k} &y_{k}+1 &y_{k+1} &\cdots &y_{i+k-j} &y_{i+k+2-j} &\cdots &y_{r}\\y_{r+j-k} &\cdots &y_{ r} &y_{1} &\cdots &y_{j-1} &y_{j} &y_{j+1} &\cdots &y_{i} &y_{i+1} &\cdots &y_{r+j-k-1}\end{pmatrix};
$$
if $k<j-1$, then take 
$$
\addtocounter{MaxMatrixCols}{3}
\beta =\begin{pmatrix}y_{1} &\cdots &y_{j-k-1} &y_{j-k} &\cdots &y_{j-1} &y_{j} &\cdots &y_{i-1} &y_{i} &y_{i+1} &\cdots &y_{r}\\y_{r+k+2-j} &\cdots &y_{r} &y_{1} &\cdots &y_{k} &y_{k+1} &\cdots &y_{i+k-j} &y_{i+k+1-j} &y_{i+k+2-j} &\cdots &y_{r+k+1-j}\end{pmatrix}
$$
and
$$
\gamma =\begin{pmatrix}y_{1} &\cdots &y_{k} &y_{k}+1 &y_{k+1} &\cdots &y_{i+k-j} &y_{i+k+2-j} &\cdots &y_{r+k+1-j} &y_{r+k+2-j} &\cdots &y_{r}\\y_{j-k} &\cdots &y_{j-1} &y_{j} &y_{j+1} &\cdots &y_{i} &y_{i+1} &\cdots &y_{r} &y_{1} &\cdots &y_{j-k-1}\end{pmatrix}.
$$

\smallskip 

Subcase 3.2. $i<j$. If $k\geqslant j$, then take 
$$
\addtocounter{MaxMatrixCols}{3}
\beta=\begin{pmatrix}y_{1} &\cdots &y_{i-1} &y_{i} &y_{i+1} &\cdots &y_{j} &y_{j+1} &\cdots &y_{r+j-k} &y_{r+j+1-k} &\cdots &y_{r}\\y_{k+1-j} &\cdots &y_{k+i-j-1} &y_{k+i-j} &y_{k+i+1-j} &\cdots &y_{k} &y_{k+1} &\cdots &y_{r} &y_{1} &\cdots &y_{k-j}\end{pmatrix}
$$
and
$$
\gamma =\begin{pmatrix}y_{1} &\cdots &y_{k-j} &y_{k+1-j} &\cdots &y_{k+i-j-1} &y_{k+i+1-j} &\cdots &y_{k} &y_{k}+1 &y_{k+1} &\cdots &y_{r}\\y_{r+j+1-k} &\cdots &y_{ r} &y_{1} &\cdots &y_{i-1} &y_{i} &\cdots &y_{j-1} &y_{j} &y_{j+1}&\cdots &y_{r+j-k}\end{pmatrix};
$$
if $j+1-i\leqslant k<j$, then take 
$$
\addtocounter{MaxMatrixCols}{3}
\beta =\begin{pmatrix}y_{1} &\cdots &y_{j-k} &y_{j+1-k} &\cdots &y_{i-1} &y_{i} &y_{i+1} &\cdots &y_{j} &y_{j+1} &\cdots &y_{r}\\y_{r+k+1-j} &\cdots &y_{r} &y_{1} &\cdots &y_{i+k-j-1} &y_{i+k-j} &y_{i+k+1-j} &\cdots &y_{k} &y_{k+1} &\cdots &y_{r+k-j}\end{pmatrix}
$$
and
$$
\gamma =\begin{pmatrix}y_{1} &\cdots &y_{i+k-j-1} &y_{i+k+1-j} &\cdots &y_{k} &y_{k}+1 &y_{k+1} &\cdots &y_{r+k-j} &y_{r+k+1-j} &\cdots &y_{r}\\y_{j+1-k} &\cdots &y_{ i-1} &y_{i} &\cdots &y_{j-1} &y_{j} &y_{j+1} &\cdots &y_{r} &y_{1} &\cdots &y_{j-k}\end{pmatrix};
$$
if $k<j+1-i$, then take 
$$
\addtocounter{MaxMatrixCols}{3}
\beta =\begin{pmatrix}y_{1} &\cdots &y_{i-1} &y_{i} &\cdots &y_{j-k} &y_{j+1-k} &\cdots &y_{j} &y_{j+1} &\cdots &y_{r}
\\y_{r+k+1-j} &\cdots &y_{r+i+k-j-1} &y_{r+i+k-j} &\cdots &y_{r} &y_{1} &\cdots &y_{k} &y_{k+1} &\cdots &y_{r+k-j}\end{pmatrix}
$$
and
$$
\gamma =\begin{pmatrix}y_{1} &\cdots &y_{k} &y_{k}+1 &y_{k+1} &\cdots &y_{r+k-j} &y_{r+k+1-j} &\cdots &y_{r+i+k-j-1} &y_{r+i+k+1-j} &\cdots &y_{r}\\y_{j-k} &\cdots &y_{j-1} &y_{j} &y_{j+1} &\cdots &y_{r} &y_{1} &\cdots &y_{i-1} &y_{i} &\cdots &y_{j-k-1}\end{pmatrix}.
$$

In all cases, it is a routine matter to verify that $\beta,\gamma\in J_{r}^{\mathcal{POPI}_{n}(Y)}$ and $\alpha=\beta\gamma$. 
Therefore, $\alpha\in \langle J_{r}^{\mathcal{POPI}_{n}(Y)}\rangle$, as required.
\end{proof}

From Lemmas~\ref{POPIa}, \ref{POPIb} and \ref{POPIc}, 
it follows that $\mathcal{POPI}_{n}(Y)$ can be generated by its elements of rank $r$, i.e. the following result holds.

\begin{lemma}\label{}
$\mathcal{POPI}_{n}(Y)=\langle J_{r}^{\mathcal{POPI}_{n}(Y)}\rangle$.
\end{lemma}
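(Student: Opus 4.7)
The plan is a short combinatorial assembly of the three preceding lemmas. First I would observe that every element of $\mathcal{POPI}_{n}(Y)$ has image contained in $Y$, so its rank is bounded by $r=|Y|$. Hence
\[
\mathcal{POPI}_{n}(Y)=\bigcup_{m=0}^{r} J_{m}^{\mathcal{POPI}_{n}(Y)},
\]
and it suffices to show that each $J_{m}^{\mathcal{POPI}_{n}(Y)}$ is contained in $\langle J_{r}^{\mathcal{POPI}_{n}(Y)}\rangle$.

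For $m=r$ this is trivial, and for $m=r-1$ I would combine Lemma~\ref{POPIb} and Lemma~\ref{POPIc}: the former gives $J_{r-1}^{\mathcal{POPI}_{n}(Y)}\subseteq \langle J_{r}^{\mathcal{POPI}_{n}(Y)}\cup V\rangle$, and the latter shows $V\subseteq \langle J_{r}^{\mathcal{POPI}_{n}(Y)}\rangle$, so that $J_{r-1}^{\mathcal{POPI}_{n}(Y)}\subseteq \langle J_{r}^{\mathcal{POPI}_{n}(Y)}\rangle$. Then I would proceed by downward induction on $m$ from $r-2$ to $0$: assuming $J_{m+1}^{\mathcal{POPI}_{n}(Y)}\subseteq \langle J_{r}^{\mathcal{POPI}_{n}(Y)}\rangle$, Lemma~\ref{POPIa} yields
\[
J_{m}^{\mathcal{POPI}_{n}(Y)}\subseteq \langle J_{m+1}^{\mathcal{POPI}_{n}(Y)}\rangle \subseteq \langle J_{r}^{\mathcal{POPI}_{n}(Y)}\rangle.
\]
Taking the union over $m$ completes the proof.

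There is no real obstacle here; the statement is just the formal packaging of Lemmas~\ref{POPIa}--\ref{POPIc} into a single generating result, and all the structural work (handling elements outside $Y$ via the cyclic rotation $g^{\ell}$, filling in missing domain points, and the case analysis on the position of the gap in $Y$) has already been carried out.
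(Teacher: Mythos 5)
Your proposal is correct and is essentially identical to the paper's argument: the paper states this lemma as an immediate consequence of Lemmas~\ref{POPIa}, \ref{POPIb} and \ref{POPIc}, which is exactly the downward-induction assembly you describe. No differences worth noting.
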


\smallskip 

Now, let
\[
\bar{g}=\begin{pmatrix}y_{1} &y_{2} &\cdots &y_{r-1} &y_{r}\\y_{2} &y_{3} &\cdots &y_{r} &y_{1}\end{pmatrix}\in\mathcal{POPI}_{n}(Y).
\]

Then, we have the following result.

\begin{lemma}\label{E}
Let $\alpha,\beta\in \mathcal{POPI}_{n}(Y)$ be such that ${\rank}(\alpha)={\rank}(\beta)=r$ and $\dom(\alpha)=\dom(\beta)$. 
Then, $\alpha=\beta \bar{g}^{t}$ for some $0\leqslant t\leqslant r-1$.
\end{lemma}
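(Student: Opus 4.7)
My plan is to exploit the fact that $\rank(\alpha)=r=|Y|$ forces $\im(\alpha)=Y$ (and similarly $\im(\beta)=Y$), so both $\alpha$ and $\beta$ are bijections from the common domain $\dom(\alpha)=\dom(\beta)$ onto $Y$. Once this is observed, the problem reduces to classifying orientation-preserving bijections between two finite chains of the same size, which are exactly the cyclic shifts.

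More precisely, I would write $\dom(\alpha)=\dom(\beta)=\{a_1<a_2<\cdots<a_r\}$ and consider the image sequences $(a_1\alpha,\ldots,a_r\alpha)$ and $(a_1\beta,\ldots,a_r\beta)$. Both are cyclic (by the orientation-preserving hypothesis) and both are injective enumerations of the set $Y=\{y_1<\cdots<y_r\}$. The key combinatorial observation is that a cyclic sequence of length $r$ that lists the elements of $\{y_1<\cdots<y_r\}$ without repetition must be of the form $(y_{1+s},y_{2+s},\ldots,y_{r+s})$ for a unique $s\in\{0,1,\ldots,r-1\}$, where indices are taken modulo $r$ in the range $\{1,\ldots,r\}$; indeed, there is exactly one index $i$ (possibly at the wrap-around) where the sequence ``decreases'', which forces all other transitions to increase by one step in $Y$. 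Applying this to $\alpha$ and $\beta$ yields integers $s,s'\in\{0,\ldots,r-1\}$ with $a_i\alpha=y_{i+s}$ and $a_i\beta=y_{i+s'}$ for all $i$ (indices mod $r$).

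Finally, I would observe that $\bar g$ acts on $Y$ by $y_j\bar g=y_{j+1}$, hence $\bar g^{\,t}$ sends $y_j$ to $y_{j+t}$ (indices mod $r$). Setting $t=(s-s')\bmod r\in\{0,1,\ldots,r-1\}$, I compute
\[
a_i(\beta\bar g^{\,t}) \;=\; (a_i\beta)\,\bar g^{\,t} \;=\; y_{i+s'}\,\bar g^{\,t} \;=\; y_{i+s'+t} \;=\; y_{i+s} \;=\; a_i\alpha
\]
for every $i\in\{1,\ldots,r\}$, and since $\dom(\beta\bar g^{\,t})=\dom(\beta)=\dom(\alpha)$ (as $\bar g$ is a permutation of $Y=\im(\beta)$), this gives $\alpha=\beta\bar g^{\,t}$, as required.

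There is essentially no serious obstacle here: the only point that deserves a clean argument is the claim that a cyclic permutation of $(y_1,\ldots,y_r)$ must be a cyclic shift, which follows directly from the definition of ``cyclic sequence'' recalled in the preliminaries together with the injectivity (bijectivity onto $Y$) of $\alpha$ and $\beta$.
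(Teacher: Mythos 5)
Your proposal is correct and follows essentially the same route as the paper: both arguments observe that, since $\rank(\alpha)=\rank(\beta)=|Y|$, the image sequences of $\alpha$ and $\beta$ over the common domain must be cyclic shifts of $(y_1,\ldots,y_r)$, and then take $t$ to be the difference of the two shift parameters modulo $r$. The paper simply writes the two transformations out as explicit matrices indexed by the position of the wrap-around, whereas you phrase the same computation with modular index arithmetic; the content is identical.
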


\begin{proof}
Assume that ${\dom}(\alpha)={\dom}(\beta)=\{a_{1}<a_{2}<\cdots<a_{r}\}$. Then,
$$
\alpha =\begin{pmatrix}a_{1} &a_{2} &\cdots &a_{i-1} &a_{i} &a_{i+1} &\cdots &a_{r}\\y_{r-i+2} &y_{r-i+3} &\cdots &y_{r} &y_{1} &y_{2} &\cdots &y_{r-i+1}\end{pmatrix}
$$
and
$$
\beta =\begin{pmatrix}a_{1} &a_{2} &\cdots &a_{j-1} &a_{j} &a_{j+1} &\cdots &a_{r}\\y_{r-j+2} &y_{r-j+3} &\cdots &y_{r} &y_{1} &y_{2} &\cdots &y_{r-j+1}\end{pmatrix}
$$
for some $1\leqslant i,j\leqslant r$. Let
\begin{equation*}
t=
\begin{cases}
j-i &\mbox{if \,$i\leqslant j$},\\
r-i+j &\mbox{if \,$i>j$}.
\end{cases}
\end{equation*}
So, it is routine to show that $\alpha=\beta \bar{g}^{t}$, as required. 
\end{proof}

\begin{lemma}\label{F}
Let $\alpha,\beta\in \mathcal{POPI}_{n}(Y)$ be two elements of rank $r$ such that ${\rank}(\alpha\beta)=r$. Then, $(\alpha\beta, \alpha)\in\mathscr{R}$.
\end{lemma}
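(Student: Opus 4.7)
The plan is to reduce this to a short domain-counting argument together with the $\mathscr{R}$-characterization already obtained in Theorem~\ref{green}(ii).

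First, I would observe that, since every element of $\mathcal{POPI}_{n}(Y)$ has image contained in $Y$ and $|Y|=r$, the hypothesis $\rank(\alpha)=\rank(\beta)=r$ forces $\im(\alpha)=\im(\beta)=Y$, and analogously $\rank(\alpha\beta)=r$ gives $\im(\alpha\beta)=Y$. Moreover, because the elements of $\mathcal{POPI}_{n}(Y)$ are (injective) partial permutations, we have $|\dom(\alpha)|=\rank(\alpha)=r$ and $|\dom(\alpha\beta)|=\rank(\alpha\beta)=r$.

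Next, using the general inclusion $\dom(\alpha\beta)\subseteq\dom(\alpha)$ that holds for any pair of partial transformations, and combining it with the equality of cardinalities $|\dom(\alpha\beta)|=r=|\dom(\alpha)|$ established above (and the finiteness of $\Omega_{n}$), I conclude $\dom(\alpha\beta)=\dom(\alpha)$.

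Finally, I would invoke Theorem~\ref{green}(ii), which states that two elements of $\mathcal{POPI}_{n}(Y)$ are $\mathscr{R}$-related precisely when they have the same domain, to deduce $(\alpha\beta,\alpha)\in\mathscr{R}^{\mathcal{POPI}_{n}(Y)}$, as required. There is no real obstacle here: the entire content lies in the cardinality comparison, which works because rank equals the size of the domain for injective partial maps, and $\mathscr{R}$ on $\mathcal{POPI}_{n}(Y)$ was already shown to coincide with the $\mathscr{R}$-relation inherited from $\mathcal{POPI}_{n}$.
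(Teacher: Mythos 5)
Your argument is correct and is essentially the paper's own proof: both use the inclusion $\dom(\alpha\beta)\subseteq\dom(\alpha)$ together with the equality of ranks (hence of domain sizes, since these are injective partial maps) to get $\dom(\alpha\beta)=\dom(\alpha)$, and then apply Theorem~\ref{green}(ii). The remarks about the images being all of $Y$ are harmless but not needed.
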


\begin{proof}
Since $\dom(\alpha\beta)\subseteq\dom(\alpha)$ and $\rank(\alpha)=\rank(\alpha\beta)$, 
it follows that $\dom(\alpha\beta)=\dom(\alpha)$, whence $(\alpha\beta, \alpha)\in\mathscr{R}$, by Theorem~\ref{green}(ii).
\end{proof}

An immediate consequence of Lemmas~\ref{E} and~\ref{F} is that any generating set of the semigroup $\mathcal{POPI}_{n}(Y)$ must contain at least one element from each $\mathscr{R}$-class of rank $r$ of $\mathcal{POPI}_{n}(Y)$, and $\mathcal{POPI}_{n}(Y)$ can be generated by any set containing $\bar{g}$ as well as at least one element from each $\mathscr{R}$-class of rank $r$ of $\mathcal{POPI}_{n}(Y)$. On the other hand, the number of $\mathscr{R}$-class of rank $r$ of $\mathcal{POPI}_{n}(Y)$ is $\tbinom{n}{r}$. Thus, we can now present the main result of this section as follows.

\begin{theorem}
Let $Y$ be a nonempty subset of $\Omega_{n}$ of size $r$. Then
\begin{equation*}
 {\rank}(\mathcal{POPI}_{n}(Y))=
\begin{cases}
\tbinom{n}{r} &\mbox{if \,$1\leqslant |Y|\leqslant n-1$},\\
\,\,2 &\mbox{if \,$|Y|=n$}.
\end{cases}
\end{equation*}
\end{theorem}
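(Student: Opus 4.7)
My plan is to first dispose of the $|Y|=n$ case, where $\mathcal{POPI}_{n}(Y)=\mathcal{POPI}_{n}$ has rank $2$ by the result of Fernandes~\cite{Fernandes:2000} already recalled. For $1\leqslant r\leqslant n-1$ the strategy is to establish the matching lower and upper bounds on $\rank(\mathcal{POPI}_{n}(Y))$ equal to $\tbinom{n}{r}$ by exploiting the lemmas accumulated in this section; the lower bound comes from counting $\mathscr{R}$-classes in the top rank, while the upper bound is realized by a concrete generating set.

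For the lower bound I would first count the $\mathscr{R}$-classes of $\mathcal{POPI}_{n}(Y)$ contained in $J_{r}^{\mathcal{POPI}_{n}(Y)}$. By Theorem~\ref{green}(ii), each such class is determined by the common domain $D\subseteq\Omega_{n}$ with $|D|=r$, and every such $D$ actually arises (for instance, through the unique order-preserving bijection $D\to Y$, which is automatically orientation-preserving), so there are exactly $\tbinom{n}{r}$ such classes. Next I would argue that any generating set $A$ of $\mathcal{POPI}_{n}(Y)$ must meet each of them. Indeed, given $\alpha$ of rank $r$, writing $\alpha=a_{1}\cdots a_{k}$ with $a_{i}\in A$, the non-increasingness of rank along products together with the maximality of $r$ forces every factor and every prefix to have rank exactly $r$; iteratively applying Lemma~\ref{F} to the pair $(a_{1}\cdots a_{j-1},a_{j})$ yields $\alpha\mathrel{\mathscr{R}} a_{1}\cdots a_{k-1}\mathrel{\mathscr{R}}\cdots\mathrel{\mathscr{R}} a_{1}$, so $a_{1}$ and $\alpha$ share an $\mathscr{R}$-class, whence $|A|\geqslant\tbinom{n}{r}$.

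For the upper bound I would exhibit an explicit generating set of cardinality $\tbinom{n}{r}$: take $A=\{\bar{g}\}\cup\{\alpha_{D}\mid D\subseteq\Omega_{n},\ |D|=r,\ D\neq Y\}$, where $\alpha_{D}$ is any chosen element of $J_{r}^{\mathcal{POPI}_{n}(Y)}$ with $\dom(\alpha_{D})=D$, and $\bar{g}$ itself serves as the representative of the $\mathscr{R}$-class of domain $Y$. Lemmas~\ref{POPIa}, \ref{POPIb}, and \ref{POPIc} together guarantee $\mathcal{POPI}_{n}(Y)=\langle J_{r}^{\mathcal{POPI}_{n}(Y)}\rangle$, so it suffices to show that each $\alpha\in J_{r}^{\mathcal{POPI}_{n}(Y)}$ lies in $\langle A\rangle$. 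Setting $D=\dom(\alpha)$, Lemma~\ref{E} applied with $\beta=\bar{g}$ when $D=Y$ gives $\alpha=\bar{g}^{\,t+1}$, while applied with $\beta=\alpha_{D}$ when $D\neq Y$ it gives $\alpha=\alpha_{D}\bar{g}^{\,t}$; in either case $\alpha$ is a product of elements of $A$. Combined with the lower bound, this yields $\rank(\mathcal{POPI}_{n}(Y))=\tbinom{n}{r}$. I do not anticipate a real obstacle beyond the iterated use of Lemma~\ref{F} in the lower bound and the matching bookkeeping with $\mathscr{R}$-class representatives and Lemma~\ref{E} in the upper bound, since the heavy lifting---the reduction to the top $\mathscr{D}$-class and the $\bar{g}$-translation within a single $\mathscr{R}$-class---has already been carried out.
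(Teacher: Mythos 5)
Your proposal is correct and follows essentially the same route as the paper: the lower bound $\tbinom{n}{r}$ comes from counting the $\mathscr{R}$-classes of rank $r$ and iterating Lemma~\ref{F} along a factorization into generators, and the upper bound comes from the generating set consisting of $\bar{g}$ together with one representative per remaining $\mathscr{R}$-class of rank $r$, using Lemmas~\ref{POPIa}--\ref{POPIc} and Lemma~\ref{E}. Your write-up merely makes explicit the bookkeeping that the paper leaves as "an immediate consequence of Lemmas~\ref{E} and~\ref{F}".
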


\lastpage 
\end{document}